\numberwithin{equation}{section}
\newtheorem{Theorem}{Theorem}[section]
\newtheorem{Lemma}[Theorem]{Lemma}
\newtheorem{Corollary}[Theorem]{Corollary}
\newtheorem{Definition}[Theorem]{Definition}
\newtheorem{Remark}[Theorem]{Remark}
\numberwithin{equation}{section}
\begin{document}

\baselineskip=16pt

\title{On the Segre Invariant for Rank Two Vector Bundles on
$\mathbb{P}^2$}

\author{L. Roa-Leguizam\'on}

\address{Instituto de F\'{\i}sica y Matem\'aticas \newline Universidad
Michoacana de San Nicol\'as de Hidalgo \newline Edificio C3,
Ciudad Universitaria \newline C.P.58040 Morelia, Mich. M\'exico.}

\email{leonardo.roa@cimat.mx}

\author{H. Torres-L\'opez}

\address{CONACyT - U. A. Matem\'aticas, U. Aut\'onoma de
Zacatecas
\newline  Calzada Solidaridad entronque Paseo a la
Bufa, \newline C.P. 98000, Zacatecas, Zac. M\'exico.}

\email{hugo@cimat.mx}

\author{A. G. Zamora}

\address{U. A. Matem\'aticas, U. Aut\'onoma de
Zacatecas
\newline  Calzada Solidaridad entronque Paseo a la
Bufa, \newline C.P. 98000, Zacatecas, Zac. M\'exico.}

\email{alexiszamora06@gmail.com}

\thanks{This paper was partially supported by CONACyT Grant CB-257079. The first author acknowledges
the financial support of Fondo Institucional de Fomento Regional
para el Desarrollo Cient\'ifico, Tecnol\'ogico y de Innovaci\'on,
FORDECYT 265667.}

\subjclass[2010]{}

\keywords{moduli of vector bundles on surfaces, Segre invariant,
stratification of the moduli space.}

\date{\today}

\begin{abstract} We extend the concept of Segre's Invariant to vector bundles on
a surface $X$. For  $X=\mathbb{P}^2$ we determine what numbers can
appear as the Segre Invariant of a rank $2$ vector bundle with
given Chern's classes. The irreducibility of strata with fixed
Segre's invariant is proved and its dimensions are computed.
Finally, we present applications to the Brill-Noether's Theory for
rank $2$ vector bundles on $\mathbb{P}^2.$ \end{abstract}

\maketitle

\section{Introduction}

We work over the  field of complex numbers $\mathbb{C}$. Given a
coherent sheaf $F$ on a variety $X$ we write $H^i(F)$ instead of
$H^i(X,F)$ and $h^i(F)$ for the corresponding dimension.

Let $C$ be a non-singular irreducible complex projective curve of
genus $g$. Let $E$ be a vector bundle of rank $n$ and degree $d$
over $C$.  For any integer $m$, $1 \leq m <  n$ the $m-$Segre
invariant is defined by
\[S_m(E) := \min\{m .\deg\, E - n .\deg \, F\}
\]
where the minimum is taken over the subbundles F of $E$ of rank m.
This invariant induces a stratification of the moduli space
$M(n,d)$ of stable vector bundles of rank $n$ and degree $d$ over
$C$.  This stratification has been studied by several authors (see
for instance \cite{Lange}, \cite{Lange-Narasimhan},
\cite{Brambila-Lange} and \cite{Russo-Teixidor}) in order to get
topological and geometric properties of $M(n,d)$.

Given a non-singular, projective surface $X$, the moduli space
$M_{X}(n,c_1,c_2)$ of rank $n$ vector bundles with Chern classes
$c_1$ and $c_2$ was constructed by Maruyama \cite{Maruyama}.
However, relatively little is known about its geometry and
subvarieties. The aim of this paper is to define the Segre
invariant for rank $2$ vector bundles on surfaces, with emphasis
in the case $X=\mathbb{P}^2$. Since this invariant defines a
semicontinuous function on the families of vector bundles of rank
$2$ on X, we get a stratification of the moduli space
$M_{\mathbb{P}^2}(2;c_1,c_2)$ into locally closed subvarieties
$M_{\mathbb{P}^2}(2;c_1,c_2;s)$ of vector bundles with fixed
Segre's invariant $s$.

Section 2 introduces the Segre invariant on surfaces and collects
a number of results that will be subsequently used. Section 3 is
the core of the paper. The main issue is to determine what numbers
can appear as the Segre invariant of a vector bundle on
$\mathbb{P}^2$ with fixed characteristic classes. The answer is
given by:

\textbf{Theorem 3.1} \hspace{.05cm} \begin{enumerate}
     \item  \textit{ Let $c_2 \geq 2$ and $k \in \mathbb{N}$.  Then a vector bundle $E\in M_{\mathbb{P}^2}(2,0,c_2)$
     with $S(E)=2k$ exists if and only
     if $k^2+k\le c_2$. Furthermore, $E$ fits in an exact
     sequence:}

\[0 \longrightarrow \mathcal{O}_{\mathbb{P}^2}(-k)  \longrightarrow E  \longrightarrow \mathcal{O}_{\mathbb{P}^2}(k)\otimes I_Z \longrightarrow 0,\]
\textit{with $Z \subset \mathbb{P}^2$ of codimension $2$, and
$\mathcal{O}_{\mathbb{P}^2}(-k) \subset{E}$  maximal.}
    \item \textit{Let $c_2 \geq 1$ and $k \in \mathbb{N}$.  Then a vector bundle $E\in M_{\mathbb{P}^2}(2,-1,c_2)$ with $S(E)=2k-1$ exists if and only
     if $k^2\le c_2$. Furthermore, $E$ fits in an exact sequence:}

\[0 \longrightarrow \mathcal{O}_{\mathbb{P}^2}(-k)  \longrightarrow E  \longrightarrow \mathcal{O}_{\mathbb{P}^2}(k-1)\otimes I_Z \longrightarrow 0,\]
\textit{with $Z \subset \mathbb{P}^2$ of codimension $2$, and
$\mathcal{O}_{\mathbb{P}^2}(-k) \subset{E}$  maximal.}
\end{enumerate} {}

The idea of the proof is to apply Serre's construction in order to
obtain the desired extension and then to show that the line
sub-bundle $\mathcal{O}_{\mathbb{P}^2}(-k)$ is, indeed, maximal.
Note that, since Segre's invariant is invariant under tensor
product with line bundles, it is sufficient to consider rank 2
vector bundles with $c_1(E)=0, -1$.

Theorem \ref{TheoremA} states in particular that the elements of
$M_{\mathbb{P}^2}(2;c_1,c_2;s)$ are parameterized by extensions of
stable vector bundles. Thus, the dimension of the stratum is
obtained by \lq\lq counting parameters" of such extensions. This
idea is formalized in Section 4, the precise statement being:

\textbf{Theorem 4.1} \label{dimensionstratum}
\begin{enumerate}
\item \textit{ Let $c_2 \geq 2$,  $k \in \mathbb{N}$ such that $k^2+k \leq c_2$. Then $M_{\mathbb{P}^2}(2;0,c_2;2k)$
is an irreducible variety of dimension:}
\[\begin{cases}
3c_2+k^2+3k-2, & \text{if $c_2 > k^2+3k+1$} \\
4c_2-3, & \text{if $c_2 \leq  k^2+3k+1$.} \\
\end{cases} \]
\item \textit{Let $c_2 \geq 1$,  $k \in \mathbb{N}$ such that $k^2 \leq c_2$.
Then $M_{\mathbb{P}^2}(2;-1,c_2;-1+2k)$ is an irreducible variety
of dimension:}
\[\begin{cases}
3c_2+k^2+2k-4, & \text{if $c_2 > k^2+2k$} \\
4c_2-4, & \text{if $c_2 \leq  k^2+2k$.} \\
\end{cases} \]
\end{enumerate}

Next, observe that the morphism:

$$0 \longrightarrow \mathcal{O}_{\mathbb{P}^2}(-k)  \longrightarrow
E,$$ leads immediately to the existence of a global section of
$E(-k)$. Thus, Segre's invariant is naturally related to the
existence of vector bundles admitting at least a section. This is
a first connection with Brill-Noether Theory. Section 5 is devoted
to investigate such connection. The main result is:

\textbf{Theorem 5.5}
\begin{enumerate}
\item \textit{Let $r, c_2,k$ be  integer  numbers satisfying  $r^2+2 \leq c_2$ and
$k<r$. Then, a vector bundle $E\in M_{\mathbb{P}^2}(2;2r,c_2,2k)$
exists such that} $h^0(E)\ge  (r-k)^2+4(r-k)+3$.

\item  \textit{Let $r, c_2$ be  integer  numbers satisfying  $r^2-r+1 \leq c_2$. Then, a vector bundle $E\in M_{\mathbb{P}^2}(2;2r-1,c_2,2k-1)$
exists such that $h^0(E)\ge t$, where}

$$ t=  \begin{cases}
(r-k)^2+4(r-k)+3  & \textit{ if }  k\neq 1, \\
r^2+r-1  & \textit{ if } k= 1. \end{cases}  $$
\end{enumerate}

We finish the paper by computing a lower bound for the dimension
of these Brill-Noether's varieties. As a consequence of our
results, the existence of nonempty Brill-Noether's locus with
negative Brill-Noether number can be deduced.

\section{Segre invariant of vector bundles on Surfaces}

We start this section by recalling the main results about vector
bundles on surfaces that shall be use in the sequel. For a further
treatment of the subject see \cite{Friedman} and
\cite{Huybrechts-Lehn}.

Let $X$ be a smooth, irreducible complex  projective surface and
let $H$ be an ample divisor on $X$. Let $\mathcal{E}$ be a torsion
free coherent sheaf on $X$ with fixed Chern classes
$c_i(\mathcal{E}) \in H^{2i}(X, \mathbb{Z})$ for $i=1,2$. The
\textit{$H-$slope} of $\mathcal{E}$ is defined as the rational
number
\[\mu_H(\mathcal{E}) := \frac{c_1(\mathcal{E}). H}{rk \, \mathcal{E} },\]
where $c_1(\mathcal{E}).H$ is the degree of $\mathcal{E}$ with
respect to $H$ and is denoted by $deg_H(\mathcal{E})$.

\begin{Definition}
\emph{ A  torsion free coherent sheaf $\mathcal{E}$  is
\textit{$H$-stable} (resp. \textit{$H$-semistable}) if for every
nonzero subsheaves $\mathcal{F}$ of smaller rank, we have
\[\mu_H(\mathcal{F})<\mu_H(\mathcal{E}) \text{ (resp. $\leq$). }\] }
\end{Definition}

Let $\mathcal{E}$ be a coherent sheaf on $X$. The dual of
$\mathcal{E}$ is the sheaf $\mathcal{E}^\vee =
\mathcal{H}om(\mathcal{E}, \mathcal{O}_X)$.  If the natural map
$\mathcal{E} \longrightarrow \mathcal{E}^{\vee\vee}$ of
$\mathcal{E}$ to its double dual is an isomorphism, we say that
$\mathcal{E}$ is reflexive.  In particular, any locally free sheaf
is reflexive. For more details related to reflexive sheaves see
\cite{Hartshorne}.

\begin{Remark}\label{stablevectorbundle}
\emph{Let $E$ be a vector bundle on $X$. Then $E$ is $H$-stable
(resp. $H$-semistable) if for all proper subbundle $F$, we have
$\mu_H(F)<\mu_H(E)$ (resp. $\leq$). Indeed, let $\mathcal{F}
\subset E$ be a proper subsheaf, hence $\mathcal{F}$ is free
torsion and  a canonical embedding exits $\mathcal{F}\rightarrow
\mathcal{F}^{\vee\vee}$ which fits  in  the following diagram:
\begin{equation*}
 \xymatrix{   \mathcal{F}  \ar[r] \ar@{^{(}->}[d]      &  E \ar@{^{(}->}[d] & \\
                 \mathcal{F}^{\vee\vee} \ar[r]_{}& E^{\vee\vee}. \\  }
\end{equation*}
Since $\mathcal{F}^{\vee\vee}$ is a reflexive sheaf and
$c_1(\mathcal{F})=c_1(\mathcal{F}^{\vee\vee})$, it follows that
$E$ is $H$-stable (resp. $H$-semistable) if for any proper
reflexive sheaf $\mathcal{F}$ we have $\mu_H(\mathcal{F}) <
\mu_H(E)$ (resp. $\leq$). Moreover, since  the singular points of
reflexive sheaf $\mathcal{F}$ has codimension greater than two,
this implies that $\mathcal{F}$ is a vector bundle. }
\end{Remark}

The following lemma allows to establish a relationship between
line subbundles of $E$ and extensions.

\begin{Lemma}\cite[Chapter 2. Proposition 5.]{Friedman} \label{factors}
Let $\phi:L \longrightarrow E$ be a sub-line bundle.  Then  a
unique effective divisor $D$ on $X$ exists,  such that the map
$\phi$ factors through the inclusion $L \longrightarrow L \otimes
\mathcal{O}_X(D)$ and such that $E/(L \otimes \mathcal{O}_X(D))$
is torsion free.
\end{Lemma}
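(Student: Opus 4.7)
The plan is to construct $D$ as measuring the failure of $\phi(L)$ to be saturated inside $E$. Let me identify $L$ with $\phi(L) \subset E$ and set $Q = E/L$, with $T \subset Q$ the maximal torsion subsheaf. Define
\[
L' := \ker\bigl(E \twoheadrightarrow Q/T\bigr).
\]
By construction $L \subset L' \subset E$, the quotient $E/L' = Q/T$ is torsion-free, and $L'/L = T$ is torsion. The first task is to check that $L'$ is itself a line bundle: it has rank one (sandwiched between $L$ and $E$, with $E/L'$ of rank $\mathrm{rk}(E)-1$), and being the kernel of a surjection from a locally free sheaf onto a torsion-free sheaf, it is reflexive; on a smooth surface a rank-one reflexive sheaf is locally free, so $L'$ is a line bundle.

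Next, I would exhibit $L'$ in the desired form. Tensoring the injection $L \hookrightarrow L'$ with $(L')^{-1}$ produces an injection $L \otimes (L')^{-1} \hookrightarrow \mathcal{O}_X$ of an invertible sheaf into $\mathcal{O}_X$; its image is an invertible ideal sheaf, hence of the form $\mathcal{O}_X(-D)$ for a unique effective Cartier divisor $D$. Undoing the twist gives $L' \cong L \otimes \mathcal{O}_X(D)$, and the inclusion $L \hookrightarrow L'$ is identified with the canonical map $L \hookrightarrow L \otimes \mathcal{O}_X(D)$ induced by the defining section of $\mathcal{O}_X(D)$. Thus $\phi$ factors as $L \hookrightarrow L \otimes \mathcal{O}_X(D) \hookrightarrow E$ with torsion-free cokernel, as required.

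For uniqueness, suppose $L \otimes \mathcal{O}_X(D'') \subset E$ is another line subbundle through which $\phi$ factors, with torsion-free quotient. Then the composition $L \otimes \mathcal{O}_X(D'') \hookrightarrow E \twoheadrightarrow Q/T$ must vanish (its image is torsion because $\mathcal{O}_X(D'')/\mathcal{O}_X$ is torsion, while $Q/T$ is torsion-free), so $L \otimes \mathcal{O}_X(D'') \subseteq L'$. The resulting quotient $L'/(L\otimes\mathcal{O}_X(D''))$ embeds in $E/(L\otimes \mathcal{O}_X(D''))$, which is torsion-free; but it is also a quotient of torsion line bundles, hence torsion itself, so it must vanish. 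Therefore $L\otimes\mathcal{O}_X(D'')=L'=L\otimes\mathcal{O}_X(D)$, forcing $D''=D$.

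The main technical point I expect to need care on is the assertion that $L'$ is locally free: the reflexivity of the kernel of a map between a locally free and a torsion-free sheaf is standard, and the step from reflexive of rank one on a smooth surface to invertible is a classical fact (see, e.g., \cite{Hartshorne}), but both deserve explicit mention rather than being buried in the construction. Everything else is bookkeeping with torsion subsheaves and the correspondence between effective divisors and invertible ideal sheaves.
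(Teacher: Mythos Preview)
The paper does not actually prove this lemma; it is quoted without proof from \cite[Chapter 2, Proposition 5]{Friedman}. Your argument is the standard saturation construction (take the preimage in $E$ of the torsion of $E/L$, check it is reflexive of rank one, hence invertible on a smooth surface), and it is correct. This is essentially the same argument Friedman gives.

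One small wording issue in your uniqueness paragraph: the phrase ``a quotient of torsion line bundles'' is garbled---line bundles are not torsion. What you mean, and what makes the argument work, is that $L'/(L\otimes\mathcal{O}_X(D''))$ is a quotient of two invertible sheaves of the same rank, hence is a torsion sheaf; since it sits inside the torsion-free sheaf $E/(L\otimes\mathcal{O}_X(D''))$, it must vanish. With that correction the proof is complete.
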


From Lemma \ref{factors} it follows that if $L \subset E$ is
maximal , then $E$ admits an extension
\[0 \longrightarrow L \longrightarrow E \longrightarrow L'\otimes I_Z \longrightarrow 0,\]
where $L' \otimes I_Z$ is torsion free and $I_Z$ denote the ideal
sheaf of a subscheme $Z$ of codimension $2$. Note that
\begin{align*}
    c_1(E) & = c_1(L) + c_1(L'),\\
    c_2(E) & = c_1(L)\cdot c_1(L')+ l(Z),
\end{align*}
where $l(Z)$ denote the length of $Z$.

Now, we extend the concept of Segre's invariant for vector bundles
on curves to vector bundles of rank $2$ on surfaces.

\begin{Definition} \label{SegreInvariant}
\begin{em}
Let  $H$ be an ample divisor on $X$. For  a vector bundle $E$ of
rank $2$ on $X$ we define the \textit{Segre invariant} $S_H(E)$ by
\[S_H(E) := 2 \min \{\mu_H(E)-\mu_H(L)\},\]
where the minimum is taken over all line subbundles $L$ of $E$.
\end{em}
\end{Definition}

Note that Definition \ref{SegreInvariant} is equivalent to
\[S_H(E) = \min \{c_1(E).H- 2L.H\}.\]

Segre's invariant is always a finite number: in case
$X=\mathbb{P}^2$ it follows from Serre's Vanishing Theorem, in
general we have:

\begin{Lemma} Let $X$ be a projective smooth surface, $H$ an ample
line bundle on $X$ and $E$ a rank $2$ vector bundle on $X$. Then,
the set:

$$\{ L.H : L\subset E, L \text{ a line bundle} \},$$
is bounded from above.\end{Lemma}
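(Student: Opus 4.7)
I plan to bound $L \cdot H$ by restricting the inclusion $L \hookrightarrow E$ to a smooth hyperplane section of $X$ and invoking the classical boundedness of the degrees of line subbundles of a rank-two bundle on a smooth projective curve.

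Fix $m \geq 1$ with $mH$ very ample, and apply Bertini to pick a smooth irreducible curve $C \in |mH|$. Given any line subbundle $L \subset E$, Lemma \ref{factors} lets me enlarge $L$ to a maximal line subbundle (an operation that only increases $L \cdot H$ and makes $E/L$ torsion-free), so I assume $L$ is maximal. The key step is to verify that the restriction $\sigma|_C$ of the inclusion $\sigma \in H^0(X, L^{-1} \otimes E)$ to $C$ is nonzero. From the exact sequence $0 \to L^{-1} \otimes E(-mH) \to L^{-1} \otimes E \to (L^{-1} \otimes E)|_C \to 0$, the vanishing $\sigma|_C = 0$ would mean that $\sigma$ lifts to a section of $L^{-1} \otimes E(-mH)$, equivalently that the inclusion $L \hookrightarrow E$ factors through an injection $L(mH) \hookrightarrow E$. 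But then the nonzero torsion sheaf $L(mH)/L$, supported on $C$, would inject into $E/L$, contradicting the torsion-freeness of the latter.

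With the nonvanishing at hand, $h^0(C, (L^{-1} \otimes E)|_C) \geq 1$, so the rank-two bundle $(L^{-1} \otimes E)|_C$ contains a line subsheaf of nonnegative degree. Tensoring with $L|_C$ shows that $E|_C$ contains a line subsheaf of degree at least $\deg_C(L|_C) = m(L \cdot H)$. The classical theory of rank-two vector bundles on smooth projective curves (equivalently, the Harder--Narasimhan filtration) bounds the degrees of line subbundles of $E|_C$ by a finite constant $\mu_{\max}(E|_C)$ depending only on $E|_C$. Hence $m(L \cdot H) \leq \mu_{\max}(E|_C)$, yielding the uniform upper bound $L \cdot H \leq \mu_{\max}(E|_C)/m$. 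The main subtlety is precisely the nonvanishing of $\sigma|_C$, which is overcome via the maximality of $L$ provided by Lemma \ref{factors}; once that is in place the reduction to the curve case is formal.
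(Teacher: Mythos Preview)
Your argument is correct, but it follows a genuinely different route from the paper's. The paper fixes, once and for all, a single exact sequence $0\to L_1\to E\to L_2\otimes I_Z\to 0$ with torsion-free quotient (obtained from Lemma~\ref{factors}) and then, for an arbitrary line subbundle $L\subset E$, looks at the composition $L\to E\to L_2\otimes I_Z$: if it vanishes then $L\subset L_1$ and $L\cdot H\le L_1\cdot H$; if it is nonzero then $L$ injects into the rank-one torsion-free sheaf $L_2\otimes I_Z$, forcing $L\cdot H\le c_1(L_2)\cdot H$. This gives the explicit bound $\max(L_1\cdot H,\,L_2\cdot H)$ without ever leaving the surface.

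Your approach instead restricts to a smooth curve $C\in|mH|$ and reduces to the classical boundedness of line subbundles of a rank-two bundle on a curve; the only nontrivial point is that the inclusion does not die on $C$, which you handle correctly via the torsion-freeness of $E/L$ after saturating $L$. The paper's argument is somewhat more elementary (no Bertini, no appeal to the curve theory) and yields an explicit bound in terms of a single chosen extension, while your restriction argument has the virtue of making the connection with the curve case transparent and would generalize more readily to higher rank or higher-dimensional $X$.
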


\begin{proof} Let $L_1$, $L_2$ be such that an exact sequence:

$$0\to L_1 \to E \to L_2\otimes I_Z\to 0$$
exists, with $Z$ a zero-cycle on $X$ (see Lemma \ref{factors}
above). Consider $L\subset E$. If $L\subseteq L_1$, then $L.H\le
L_1.H$. If not, then $h^0(L_1\otimes L^{-1})=0$ and in consequence
$h^0(L_2\otimes I_Z\otimes L^{-1})\neq 0$.

From the stability of $L_2\otimes I_Z$ it follows that:

$$ L.H < c_1(L_2\otimes I_Z).H.$$
\end{proof}

The term invariant is used because $S_H(E) = S_H(E \otimes L)$ for
any line bundle $L\in Pic(X)$. By Remark \ref{stablevectorbundle}
$E$ is $H-$ stable (resp. $H-$semistable) if and only if $S_H(E) >
0$ (resp. $\geq$). We say that $L \subset E$ is \textit{maximal}
if
\[S_H(E) = c_1(E). H - 2c_1(L).H.\]

The Segre invariant induces a stratification of the moduli space
of vector bundles:

\begin{Lemma}\label{semicontinuous}
Let  $H$ be an ample divisor on $X$ and let $T$ be a variety. Let
$\mathcal{E}$ be a vector bundle of rank $2$ on $X \times T$.  The
function
\begin{eqnarray*}
S_H: T &\longrightarrow& \mathbb{Z} \\
t &\longmapsto& S_H(\mathcal{E}_t)
\end{eqnarray*}
is lower semicontinuous.
\end{Lemma}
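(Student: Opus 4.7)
The plan is to show that for every $s\in\mathbb{Z}$, the level set $T_{\leq s}:=\{t\in T : S_H(\mathcal{E}_t)\leq s\}$ is Zariski closed in $T$. By Definition \ref{SegreInvariant} and Lemma \ref{factors}, $t\in T_{\leq s}$ if and only if $\mathcal{E}_t$ fits in an exact sequence
$$0\to L\to \mathcal{E}_t\to L'\otimes I_Z\to 0$$
with $L'\otimes I_Z$ torsion free of rank one and $c_1(\mathcal{E}_t).H - 2c_1(L).H\leq s$, equivalently $c_1(L').H \leq \tfrac{1}{2}(c_1(\mathcal{E}_t).H + s)$. Thus $T_{\leq s}$ is precisely the locus over which a torsion-free rank-one quotient of $\mathcal{E}$ with suitably bounded first Chern class exists.

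The main tool is the relative Quot scheme. For each Hilbert polynomial $P$ computed with respect to $H$, Grothendieck's construction produces a projective morphism
$$\mathrm{Quot}^{P}_{X\times T/T}(\mathcal{E})\longrightarrow T$$
whose image in $T$ is closed by properness. Hence it suffices to prove that only finitely many Hilbert polynomials $P$ correspond to torsion-free rank-one quotients satisfying the above bound. The displayed inequality bounds $c_1(L').H$ from above, while the preceding lemma bounds $c_1(L).H = c_1(\mathcal{E}_t).H - c_1(L').H$ from above among line subbundles, so $c_1(L').H$ is also bounded below. Combined with the relation $c_2(\mathcal{E}_t) = c_1(L).c_1(L') + l(Z)$ and Grothendieck's boundedness theorem for families of quotient sheaves of bounded slope (see e.g.\ Huybrechts--Lehn, Lemma 1.7.9), this yields that only finitely many such $P$ occur. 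Therefore $T_{\leq s}$ is a finite union of closed subsets, hence closed.

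The main obstacle is exactly this boundedness step: on a surface whose Néron--Severi lattice has rank greater than one, the bound on $c_1(L').H$ alone does not determine the numerical class of $L'$, so one cannot list the relevant Hilbert polynomials by hand. One must instead appeal to the general Grothendieck boundedness theorem to collect them into finitely many bounded families. Once this is invoked, the semicontinuity conclusion follows at once from the standard Quot-scheme argument sketched above.
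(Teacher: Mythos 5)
Your argument is correct and is essentially the proof the paper has in mind: the paper's own ``proof'' is a one-line reference to Maruyama's openness of stability, whose proof is exactly the Quot-scheme-plus-boundedness argument you spell out (pass to the saturated rank-one quotient, bound its $H$-degree, invoke Grothendieck's boundedness lemma to reduce to finitely many Hilbert polynomials, and use properness of the relative Quot scheme to conclude that each level set $\{t : S_H(\mathcal{E}_t)\le s\}$ is closed). There is no substantive difference in approach, only that you have written out the details the paper delegates to the citation.
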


\begin{proof}
The semicontinuity follows as a slight generalization of the
openness property of stability for which the same proof works (see
\cite[Theorem 2.8]{Maruyama})
\end{proof}

The moduli space of $H$-stable vector bundles with fixed Chern
classes $c_1$ and $c_2$ on $X$ were constructed in the 1970's by
Maruyama (see \cite{Maruyama1}).  We shall denote the moduli space
of $H-$stable vector bundles of rank $n$ and Chern classes $c_1$
and $c_2$ on $X$ by $M_{X,H}(n; c_1,c_2)$. In case $X$ is a
rational surface the dimension of moduli of rank $2$ vector
bundles  is $\dim M_{X,H}(2, c_1,c_2)=4c_2-c_1^2-3$.

\section{Segre Invariant for rank $2$ vector bundles on $\mathbb{P}^2$}

In this section we study the Segre invariant for vector bundles of
rank $2$ on $\mathbb{P}^2$, in this case a uniquely determined
integer number $k_E$ exists such that $c_1(E \otimes
\mathcal{O}_{\mathbb{P}^2}(k_E)) \in \{ -1, 0\}$. Namely
\[k_E = \begin{cases}
     -\frac{c_1(E)}{2}, & \text{if $c_1(E)$ even} \\
     -\frac{c_1(E)+1}{2}, & \text{if $c_1(E)$ odd.}
\end{cases}\]

Since $S(E) = S(E \otimes L)$ for any line bundle $L$ on
$\mathbb{P}^2$, in the remainder of this section we assume that
$E$ has degree $c_1 \in \{-1,0\}$ and second Chern class $c_2$.
Furthermore,  by stability we mean stability with respect to
$\mathcal{O}_{\mathbb{P}^2}(1)$. In fact as $Pic \, (\mathbb{P}^2)
\cong \mathbb{Z}$, there is a unique notion of stability for
$\mathbb{P}^2$.  Let $F$ be a vector bundle on $\mathbb{P}^2$,  by
abuse of notation we will use $c_1(F)$ to denote the degree of $F$
with respect to $\mathcal{O}_{\mathbb{P}^2}(1)$ and we write
$S(E)$ to denote the Segre invariant of the vector bundle $E$ of
rank $2$ on $\mathbb{P}^2$.

The following theorem is the main result of this paper. It gives
necessary and sufficient conditions for the existence of a stable
vector bundle $E$ of rank $2$, degree $c_1=0$ and $S(E)=2k$ (resp.
$c_1=-1$ and $S(E) = -1+2k$).

\begin{Theorem} \label{TheoremA}
\hspace{.05cm}
\begin{enumerate}
     \item  Let $c_2 \geq 2$ and $k \in \mathbb{N}$.  Then a vector bundle $E\in M_{\mathbb{P}^2}(2,0,c_2)$ with $S(E)=2k$ exists if and only
     if $k^2+k\le c_2$. Furthermore, $E$ fits in an exact sequence:

\[0 \longrightarrow \mathcal{O}_{\mathbb{P}^2}(-k)  \longrightarrow E  \longrightarrow \mathcal{O}_{\mathbb{P}^2}(k)\otimes I_Z \longrightarrow 0,\]
with $Z \subset \mathbb{P}^2$ of codimension $2$ and
$\mathcal{O}_{\mathbb{P}^2}(-k) \subset{E}$  maximal.
    \item Let $c_2 \geq 1$ and $k \in \mathbb{N}$.  Then a vector bundle $E\in M_{\mathbb{P}^2}(2,-1,c_2)$ with $S(E)=2k-1$ exists if and only
     if $k^2\le c_2$. Furthermore, $E$ fits in an exact sequence:

\[0 \longrightarrow \mathcal{O}_{\mathbb{P}^2}(-k)  \longrightarrow E  \longrightarrow \mathcal{O}_{\mathbb{P}^2}(k-1)\otimes I_Z \longrightarrow 0,\]
with $Z \subset \mathbb{P}^2$ of codimension $2$ and
$\mathcal{O}_{\mathbb{P}^2}(-k) \subset{E}$  maximal.
\end{enumerate}
\end{Theorem}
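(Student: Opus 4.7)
The plan is to treat parts (1) and (2) uniformly, since both reduce to the same analysis on $\mathbb{P}^2$: an extension with maximal sub-bundle $\mathcal{O}_{\mathbb{P}^2}(-k)$ of the prescribed type exists exactly when a zero-cycle $Z$ of prescribed length can be placed in sufficiently general position, and the stated numerical inequality is precisely the threshold for $Z$ to impose independent conditions on the relevant linear system on $\mathbb{P}^2$.

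For necessity, assume $E \in M_{\mathbb{P}^2}(2, c_1, c_2)$ has $S(E) = 2k$ (resp.\ $2k-1$). Applying Lemma \ref{factors} to a maximal line sub-bundle gives an extension
$$0 \longrightarrow L \longrightarrow E \longrightarrow L' \otimes I_Z \longrightarrow 0,$$
and the definition of $S(E)$ forces $L = \mathcal{O}_{\mathbb{P}^2}(-k)$ and $L' = \mathcal{O}_{\mathbb{P}^2}(k)$ (resp.\ $\mathcal{O}_{\mathbb{P}^2}(k-1)$), from which the Chern-class identities of Section 2 yield $l(Z) = c_2 + k^2$ (resp.\ $c_2 + k^2 - k$). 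Twisting by $\mathcal{O}_{\mathbb{P}^2}(k-1)$ and using $H^i(\mathcal{O}_{\mathbb{P}^2}(-1)) = 0$, cohomology gives $h^0(E(k-1)) = h^0(I_Z(2k-1))$ (resp.\ $h^0(I_Z(2k-2))$). Maximality of $\mathcal{O}_{\mathbb{P}^2}(-k)$ makes the left side vanish, and the standard bound $h^0(I_Z(d)) \geq h^0(\mathcal{O}_{\mathbb{P}^2}(d)) - l(Z)$ then rearranges into $c_2 \geq k^2 + k$ (resp.\ $c_2 \geq k^2$).

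For sufficiency, I would fix a sufficiently generic zero-cycle $Z \subset \mathbb{P}^2$ of the prescribed length. Generic position achieves two things simultaneously. First, the Cayley--Bacharach condition with respect to the appropriate twist of $\omega_{\mathbb{P}^2}$ holds, so Serre's construction yields a locally free sheaf $E$ with the correct Chern classes fitting in the desired extension. Second, $h^0(I_Z(d)) = 0$ for all $d \leq 2k-1$ (resp.\ $d \leq 2k-2$). The second vanishing forces $\mathcal{O}_{\mathbb{P}^2}(-k) \subset E$ to be maximal: a hypothetical line sub-bundle $\mathcal{O}_{\mathbb{P}^2}(j) \subset E$ with $j > -k$ cannot map into $\mathcal{O}_{\mathbb{P}^2}(-k)$, hence descends to a nonzero element of $H^0(I_Z(k-j))$ (resp.\ $H^0(I_Z(k-1-j))$), contradicting the vanishing. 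Maximality with $k \geq 1$ then gives $S(E) > 0$, so $E$ is stable.

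The main obstacle will be the simultaneous genericity: the Cayley--Bacharach condition (needed for Serre's construction) and the cohomological vanishings (needed for maximality) must both hold for the same $Z$. Both are open conditions on the Hilbert scheme of $l(Z)$ points, so their intersection is nonempty once $l(Z)$ respects the stated bound. The borderline cases $c_2 = k^2+k$ and $c_2 = k^2$ require extra care, since there $l(Z)$ matches exactly the dimension of the relevant linear system and the vanishing becomes sharp rather than strict.
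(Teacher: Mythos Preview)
Your proposal is correct and follows the same route as the paper: Serre's construction with a general zero-cycle for sufficiency, and the twist by $k-1$ together with $h^0(I_Z(d))\geq h^0(\mathcal{O}_{\mathbb{P}^2}(d))-l(Z)$ for necessity. The one refinement in the paper addresses exactly your ``main obstacle'': rather than treating Cayley--Bacharach and the vanishing $h^0(I_Z(2k-1))=0$ as two independent open conditions to be intersected, the paper shows that the vanishing alone already forces CB for $(\mathcal{O}_{\mathbb{P}^2}(2k-3),Z)$---a section of $\mathcal{O}_{\mathbb{P}^2}(2k-3)$ through a colength-one subscheme $\tilde Z$, multiplied by any conic through the missing point, would give a forbidden section of $\mathcal{O}_{\mathbb{P}^2}(2k-1)$ through $Z$---so a single genericity condition suffices and the borderline cases $c_2=k^2+k$, $c_2=k^2$ need no special treatment.
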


We need some auxiliary results:

\begin{Lemma} \cite[Lemma 1.2.5]{Okonek} \label{estableOkonek}
Let $E$ be a vector bundle on $\mathbb{P}^2$ of rank 2 and first
Chern class $c_1=0$ or $c_1 = -1$. Then $E$ is stable if and only
if $h^0(E)=0$.
\end{Lemma}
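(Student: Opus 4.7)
The plan is to prove both implications by contrapositive, exploiting the fact that $\mathrm{Pic}(\mathbb{P}^2) \cong \mathbb{Z}$, so every line subbundle of $E$ has the form $\mathcal{O}_{\mathbb{P}^2}(a)$ for some integer $a$. This reduces the stability question to a comparison of integers (or half-integers) and will make both directions short.

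For the forward direction (stability implies $h^0(E)=0$), I would assume $h^0(E)\neq 0$ and produce a destabilizing subbundle. A nonzero section yields an injection $\mathcal{O}_{\mathbb{P}^2}\hookrightarrow E$. Applying Lemma \ref{factors} to this sub-line bundle, the map factors as $\mathcal{O}_{\mathbb{P}^2}\to \mathcal{O}_{\mathbb{P}^2}(D)\hookrightarrow E$ with $D$ an effective divisor and the quotient $E/\mathcal{O}_{\mathbb{P}^2}(D)$ torsion free. Hence $\deg D\geq 0$, and since $\mu(E)=c_1(E)/2\in\{-1/2,\,0\}$, we obtain $\mu(\mathcal{O}_{\mathbb{P}^2}(D))\geq 0\geq \mu(E)$, contradicting stability.

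For the reverse direction ($h^0(E)=0$ implies stability), I would assume $E$ is not stable. By Remark \ref{stablevectorbundle} applied to the rank-two case, there exists a sub-line bundle $L\subset E$ with $\mu(L)\geq \mu(E)$. Writing $L=\mathcal{O}_{\mathbb{P}^2}(a)$, this says $2a\geq c_1(E)\in\{-1,0\}$. Since $a\in\mathbb{Z}$, in both cases this forces $a\geq 0$, so $h^0(L)=h^0(\mathcal{O}_{\mathbb{P}^2}(a))\geq 1$ and thus $h^0(E)\geq 1$.

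There is no serious obstacle here; the proof is essentially formal. The one point worth emphasizing is the integrality argument in the reverse direction: when $c_1(E)=-1$, the slope $\mu(E)=-1/2$ is a half-integer, and the inequality $2a\geq -1$ with $a\in\mathbb{Z}$ still gives $a\geq 0$. This is precisely where the hypothesis $c_1(E)\in\{-1,0\}$ (rather than an arbitrary value) is used, and it is also why the equivalence is phrased for these two normalized values of $c_1$, consistent with the reduction $E\mapsto E\otimes\mathcal{O}_{\mathbb{P}^2}(k_E)$ discussed at the start of Section~3.
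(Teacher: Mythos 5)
Your proof is correct. Note that the paper itself offers no argument for this lemma: it is quoted directly from Okonek--Schneider--Spindler \cite[Lemma 1.2.5]{Okonek}, so there is no internal proof to compare against; what you have written is essentially the standard argument from that reference, and it is complete. Both directions are handled properly: in the forward direction the key point is that a nonzero section is automatically injective into a torsion-free sheaf and saturates (via Lemma \ref{factors}) to a sub-line bundle $\mathcal{O}_{\mathbb{P}^2}(D)$ with $D$ effective, hence of non-negative slope, which already violates the strict inequality in the definition of stability since $\mu(E)\in\{-1/2,0\}$; in the reverse direction the reduction to line subbundles via Remark \ref{stablevectorbundle} together with the integrality of $a$ (so that $a\ge -1/2$ forces $a\ge 0$) is exactly the point, and you correctly identify this as the place where the normalization $c_1\in\{-1,0\}$ is used. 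One could streamline the forward direction slightly by observing that the subsheaf $\mathcal{O}_{\mathbb{P}^2}\subset E$ already has slope $0\ge\mu(E)$ and is destabilizing under the sheaf-theoretic Definition 2.1, without invoking Lemma \ref{factors} at all, but your version is equally valid.
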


Serre's construction  provides a method for constructing rank two
vector bundles on a surface $X$ (see \cite[Chapter
5]{Huybrechts-Lehn} for more details):

\begin{Theorem}\cite[Theorem 5.1.]{Huybrechts-Lehn} \label{Cayley-Bacharach}
Let $Z \subset X$ be a local complete intersection of codimension
two in the projective non-singular surface $X$, and let $L$ and
$M$ be line bundles on $X$. Then there exists an extension
\[0 \longrightarrow L \longrightarrow E \longrightarrow M \otimes I_Z \longrightarrow 0\]
such that $E$ is locally free if and only if the pair $(L^{-1}
\otimes M \otimes \omega_X,Z)$ satisfy the Cayley-Bacharach
property:
\begin{align*}
 (CB) \,\,\,  & \text{if $Z' \subset Z$ is a sub-scheme with
 $l({\tilde Z})= l(Z)-1$ and }\\
 & \text{$s \in H^0(L^{-1} \otimes M \otimes \omega_X)$  with  $s\vert _{\tilde Z}=0$, then $s\vert_Z=0$}.
\end{align*}
\end{Theorem}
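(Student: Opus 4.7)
My plan is to classify the extensions via $\operatorname{Ext}^1(M \otimes I_Z, L)$, analyse the locally free condition pointwise on $Z$, and then use Serre duality to convert this into the Cayley-Bacharach condition.

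First, I would compute the local Ext sheaves. Since $Z$ is a local complete intersection of codimension two, its ideal is locally generated by a regular sequence $(f, g)$, giving the Koszul resolution $0 \to \mathcal{O}_X \to \mathcal{O}_X^{\oplus 2} \to I_Z \to 0$. Combined with $0 \to I_Z \to \mathcal{O}_X \to \mathcal{O}_Z \to 0$ and local duality for $Z$, this yields
$$\mathcal{H}om(M \otimes I_Z, L) \simeq L \otimes M^{-1}, \qquad \mathcal{E}xt^1(M \otimes I_Z, L) \simeq (L \otimes M^{-1} \otimes \omega_X^{-1})|_Z \otimes \omega_Z,$$
while $\mathcal{E}xt^i(M \otimes I_Z, L) = 0$ for $i \geq 2$. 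The local-to-global spectral sequence for Ext then produces the five-term sequence
$$0 \to H^1(L \otimes M^{-1}) \to \operatorname{Ext}^1(M \otimes I_Z, L) \xrightarrow{\alpha} H^0\bigl(\mathcal{E}xt^1(M \otimes I_Z, L)\bigr) \xrightarrow{\delta} H^2(L \otimes M^{-1}).$$

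Second, I would establish the local freeness criterion: an extension $0 \to L \to E \to M \otimes I_Z \to 0$ of class $\xi$ has $E$ locally free if and only if $\alpha(\xi)$ generates the stalk $\mathcal{E}xt^1(M \otimes I_Z, L)_p \cong \mathcal{O}_{Z,p}$ at every $p \in Z$. Away from $Z$ the quotient is a line bundle, so there is nothing to check. At $p \in Z$, the Koszul resolution identifies the local Ext group with $\mathcal{O}_{X,p}/(f,g) = \mathcal{O}_{Z,p}$, and a direct computation of the pushout construction shows that $E_p$ is free of rank two iff the representing element $\bar a \in \mathcal{O}_{Z,p}$ is a unit.

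Third, I would interpret this via duality. By Serre duality on $X$, $H^2(L \otimes M^{-1})$ is dual to $H^0(L^{-1} \otimes M \otimes \omega_X)$; combining this with relative duality for the Artinian scheme $Z$ and the description of $\mathcal{E}xt^1$ above, the connecting map $\delta$ is identified with the dual of the restriction
$$\rho \colon H^0(L^{-1} \otimes M \otimes \omega_X) \longrightarrow H^0\bigl((L^{-1} \otimes M \otimes \omega_X)|_Z\bigr).$$
By Step 2, a locally free extension exists iff $\ker \delta$ contains an element generating $\mathcal{E}xt^1$ at every point of $Z$. Dualising, this becomes the statement that $\operatorname{coker} \rho$ carries no nonzero linear functional supported at a single closed point of $Z$, which unwinds to: every $s \in H^0(L^{-1} \otimes M \otimes \omega_X)$ vanishing on a colength-one subscheme $\tilde Z \subset Z$ already vanishes on all of $Z$. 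This is exactly the Cayley-Bacharach property.

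The main obstacle is the duality identification in Step 3: tracking the twists by $\omega_X$ and $\omega_Z$ carefully and verifying that the cohomological connecting homomorphism $\delta$ truly corresponds to $\rho^\vee$ under the composite of Serre duality on $X$ and relative duality on $Z$. Once this compatibility is secured, the passage from "unit generator in $\ker \delta$" to the CB condition is a routine linear-algebra argument on the finite scheme $Z$.
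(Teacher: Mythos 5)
The paper offers no proof of this statement: it is quoted verbatim from Huybrechts--Lehn \cite[Theorem 5.1.1]{Huybrechts-Lehn} and used as a black box, so the only meaningful comparison is with that reference. Your outline is in fact the standard argument given there: Koszul resolution to compute $\mathcal{H}om$ and $\mathcal{E}xt^1$, the local-to-global five-term sequence, the criterion that $E$ is locally free iff the image of the extension class generates the stalk of $\mathcal{E}xt^1(M\otimes I_Z,L)\cong\mathcal{O}_Z$ at every point of $Z$, and Serre duality plus duality on the Artinian scheme $Z$ to identify $\delta$ with $\rho^\vee$. Structurally this is correct and it is the right proof.

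Two points need repair before this is a proof rather than a plan. First, your dual reformulation of (CB) is garbled: the Cayley--Bacharach condition is not that $\operatorname{coker}\rho$ admits no nonzero functional \emph{supported at a single point}; it is that $W:=\operatorname{im}\rho$ meets trivially each of the one-dimensional subspaces $K_{\tilde Z}=\ker\bigl(H^0((L^{-1}\otimes M\otimes\omega_X)|_Z)\to H^0((L^{-1}\otimes M\otimes\omega_X)|_{\tilde Z})\bigr)$, equivalently that $\ker\delta=W^{\perp}$ is not contained in the hyperplane $K_{\tilde Z}^{\perp}$ for any colength-one $\tilde Z\subset Z$. (An element of $H^0(\mathcal{E}xt^1)$ generates the stalk at $p$ precisely when it is nonzero on every minimal nonzero ideal of $\mathcal{O}_{Z,p}$, i.e.\ on every one-dimensional subspace of the socle, and these are exactly the $K_{\tilde Z}$ concentrated at $p$.) Second, the direction (CB) $\Rightarrow$ existence requires a quantifier swap that you dismiss as routine: (CB) gives, for \emph{each} $\tilde Z$, some $\xi\in\ker\delta$ with $\xi|_{K_{\tilde Z}}\neq 0$, whereas you need a \emph{single} $\xi\in\ker\delta$ nondegenerate on all $K_{\tilde Z}$ simultaneously. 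When $Z$ is non-reduced there are positive-dimensional families of colength-one subschemes (the projectivized socles), so this is not a finite union of proper subspaces; one must argue that the bad locus in $\ker\delta$ is a proper closed subvariety, using that the $K_{\tilde Z}$ vary algebraically and the base field is infinite. That step is where the actual content of this implication lives, and it is exactly what the proof in \cite{Huybrechts-Lehn} supplies.
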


\begin{Lemma} \label{tool}
Let $Z\subset \mathbb{P}^2$ be a zero-cycle in $\mathbb{P}^2$ such
that $h^0(\mathcal{O}_{\mathbb{P}^2}(d) \otimes I_Z)=0$. Consider
a point $p\in Supp(Z)$ and write $Z={\tilde Z}+p$. Then,
\[h^0(\mathcal{O}_{\mathbb{P}^2}(d-2) \otimes I_{\tilde Z})=0.\]
In particular, $(\mathcal{O}_{\mathbb{P}^2}(d-2),Z)$ satisfies
(CB).
\end{Lemma}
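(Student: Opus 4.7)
The plan is to argue by contradiction: suppose there exists a nonzero $s \in H^0(\mathcal{O}_{\mathbb{P}^2}(d-2) \otimes I_{\tilde Z})$, and construct from it a nonzero section in $H^0(\mathcal{O}_{\mathbb{P}^2}(d) \otimes I_Z)$, which the hypothesis forbids. Since the degree gap is exactly two, the natural move is to multiply $s$ by a product of two linear forms chosen so that the outcome lies in $I_Z$.

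The central ingredient is a local analysis at $p$. The condition $Z = \tilde Z + p$ with $l(\tilde Z) = l(Z) - 1$ says that $I_{\tilde Z}/I_Z$ is a skyscraper sheaf of length one supported at $p$; at the stalk, $I_{\tilde Z, p}/I_{Z, p}$ is a simple $\mathcal{O}_{\mathbb{P}^2, p}$-module and is therefore killed by the maximal ideal, giving $\mathfrak{m}_p \cdot I_{\tilde Z, p} \subset I_{Z, p}$. Choose $\ell_1 \in H^0(\mathcal{O}_{\mathbb{P}^2}(1))$ vanishing at $p$, so that locally $\ell_1 \in \mathfrak{m}_p$. Then $\ell_1 s \in I_Z$ at $p$; and since $I_{\tilde Z}$ already coincides with $I_Z$ away from $p$, this yields $\ell_1 s \in H^0(\mathcal{O}_{\mathbb{P}^2}(d-1) \otimes I_Z)$. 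Multiplying further by an arbitrary nonzero $\ell_2 \in H^0(\mathcal{O}_{\mathbb{P}^2}(1))$, I obtain $\ell_1 \ell_2 s \in H^0(\mathcal{O}_{\mathbb{P}^2}(d) \otimes I_Z)$, which vanishes by hypothesis. Since $\mathbb{P}^2$ is integral and $\ell_1, \ell_2, s$ are all nonzero sections of line bundles, their product cannot vanish; this contradiction forces $s=0$.

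The Cayley-Bacharach statement is then immediate: by definition, $(\mathcal{O}_{\mathbb{P}^2}(d-2), Z)$ satisfies (CB) provided that every $s \in H^0(\mathcal{O}_{\mathbb{P}^2}(d-2))$ vanishing on some length $l(Z) - 1$ subscheme $\tilde Z \subset Z$ also vanishes on $Z$. The first part shows $H^0(\mathcal{O}_{\mathbb{P}^2}(d-2) \otimes I_{\tilde Z}) = 0$ for every such $\tilde Z$, so there are no nonzero candidates $s$ to check and the condition is satisfied vacuously. The only genuinely nontrivial step in the whole argument is the local-algebra observation that $\mathfrak{m}_p$ annihilates the length-one quotient $I_{\tilde Z, p}/I_{Z, p}$; once that is in place, the rest is a routine degree count together with the integrality of $\mathbb{P}^2$.
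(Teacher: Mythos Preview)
Your proof is correct and follows essentially the same idea as the paper's: both argue by contradiction, multiplying a hypothetical nonzero $s\in H^0(\mathcal{O}_{\mathbb{P}^2}(d-2)\otimes I_{\tilde Z})$ by a degree-two form vanishing at $p$ to produce a nonzero element of $H^0(\mathcal{O}_{\mathbb{P}^2}(d)\otimes I_Z)$. The paper phrases this as an injective linear map $H^0(\mathcal{O}_{\mathbb{P}^2}(2)\otimes I_p)\to H^0(\mathcal{O}_{\mathbb{P}^2}(d)\otimes I_Z)$, $C\mapsto CC_1$, from a $5$-dimensional source; you instead pick a single product $\ell_1\ell_2$ and invoke integrality. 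One small point in your favor: your local-algebra remark that $\mathfrak{m}_p\cdot I_{\tilde Z,p}\subset I_{Z,p}$ (because $I_{\tilde Z,p}/I_{Z,p}$ is simple) is exactly what is needed to ensure the product lands in $I_Z$ scheme-theoretically when $Z$ is not reduced at $p$, a step the paper leaves implicit.
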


\begin{proof}
Suppose  that $h^0(\mathcal{O}_{\mathbb{P}^2}(d-2) \otimes
I_{\tilde Z})\ne 0$ and let $C_1$ be one of its elements different
from zero. Consider the map
\begin{eqnarray*}
H^0(\mathcal{O}_{\mathbb{P}^2}(2)\otimes I_p)&\rightarrow & H^0(\mathcal{O}_{\mathbb{P}^2}(d) \otimes I_Z),\\
C&\mapsto& CC_1
\end{eqnarray*}
which is lineal and injective. Since
$h^0(\mathcal{O}_{\mathbb{P}^2}(2)\otimes I_p)=5$ we obtain a
contradiction.
\end{proof}

\begin{proof}\textbf{Proof of Theorem \ref{TheoremA}}
\begin{enumerate}
\item  Assume $k^2+k\le c_2$. Then  a zero cycle $Z$, locally
a complete intersection and of length $l(Z)= c_2+k^2$ exists such
that $Z$ is not contained in any curve of degree $2k-1$.  By the
Lemma \ref{tool}, the pair $(\mathcal{O}_{\mathbb{P}^2}(2k-3),Z)$
satisfies the Cayley-Bacharach property. Therefore (see Theorem
\ref{Cayley-Bacharach}), an extension
\begin{equation}\label{38imaximal}
  0 \longrightarrow \mathcal{O}_{\mathbb{P}^2}(-k) \longrightarrow E \longrightarrow \mathcal{O}_{\mathbb{P}^2}(k) \otimes I_Z \longrightarrow 0
\end{equation}
exists where $E$ is locally free and has Chern classes $c_1(E)=0$
and $c_2(E) = c_2$.

Moreover, since $Z$ is not contained in any curve of degree $2k-1$
it follows that $h^0( \mathcal{O}_{\mathbb{P}^2}(k) \otimes I_Z) =
h^0(E)=0$. Therefore, by Lemma \ref{estableOkonek} the vector
bundle $E$ is stable.

Finally, we prove that  $\mathcal{O}_{\mathbb{P}^2}(-k)$ is
maximal.  Let $\mathcal{O}_{\mathbb{P}^2}(l)$ be a line bundle
with $l<k$, from the exact sequence (\ref{38imaximal}) we have
\[0 \rightarrow \mathcal{O}_{\mathbb{P}^2}(-k+l) \rightarrow E(l) \rightarrow \mathcal{O}_{\mathbb{P}^2}(k+l) \otimes I_Z \rightarrow 0.\]

Since $l<k$ it follows that:

$$ h^0( E(l)) = h^0(\mathcal{O}_{\mathbb{P}^2}(k+l)\otimes I_Z)\le h^0(\mathcal{O}_{\mathbb{P}^2}(2k-1)\otimes I_Z)=0.$$

This implies that $\mathcal{O}_{\mathbb{P}^2}(-l)$ is not a
subbundle of $E$ and thus $\mathcal{O}_{\mathbb{P}^2}(-k)$ is
maximal.

Conversely, assume that a rank $2$ bundle with Chern classes
$c_1(E)=0$, $c_2(E)=c_2<k^2+k$ and $S(E)=2k$ exists. Therefore, we
have an exact sequence
\begin{equation}\label{maximalc2}
  0 \rightarrow \mathcal{O}_{\mathbb{P}^2}(-k) \rightarrow E \rightarrow \mathcal{O}_{\mathbb{P}^2}(k) \otimes I_Z \rightarrow 0,
\end{equation}
where $Z\subset \mathbb{P}^2$ is a local  complete intersection of
codimension $2$ with length $l(Z)= c_2+k^2$ and $
\mathcal{O}_{\mathbb{P}^2}(-k)$ is maximal. Hence,
$l(Z)<h^0(\mathcal{O}_{\mathbb{P}^2}(2k-1))$ and

\[h^0(\mathcal{O}_{\mathbb{P}^2}(2k-1)\otimes
I_Z)\neq 0.\] From the exact sequence (\ref{maximalc2}) we have:
\begin{equation*}
  0 \rightarrow \mathcal{O}_{\mathbb{P}^2}(-1) \rightarrow E(k-1) \rightarrow \mathcal{O}_{\mathbb{P}^2}(2k-1) \otimes I_Z \rightarrow
  0,
\end{equation*}
which induces the long exact sequence:
\[
    0 \rightarrow  H^0(\mathcal{O}_{\mathbb{P}^2}(-1))  \rightarrow H^0(E(k-1))
    \rightarrow H^0(\mathcal{O}_{\mathbb{P}^2}(2k-1) \otimes I_Z) \rightarrow 0 \]
Thus,
 \[h^0(E(k-1))=h^0(\mathcal{O}_{\mathbb{P}^2}(2k-1)\otimes I_Z)\neq 0,\]
but this implies that $\mathcal{O}_{\mathbb{P}^2}(-k+1)$ is a
subbundle of $E$,   contradicting  the maximality of
$\mathcal{O}_{\mathbb{P}^2}(-k)$.

\item The proof is quite analogous to the proof of  $(1)$: taking $Z\subset \mathbb{P}^2$ a local complete intersection of
codimension 2 with length $l(Z)= c_2+k^2-k$ such that $Z$ is not contained in any curve of degree $2k-2$ with $k\neq 1$ and noting  that
\[h^0(\mathcal{O}_{\mathbb{P}^2}(2k-2)) = 2k^2-k \leq l(Z) = c_2+k^2-k\]
because $k^2 \leq c_2$.  If $k=1$ the pair $(\mathcal{O}(-2),Z)$
satisfies Cayley- Bacharach and we conclude as in $(1)$.

For the converse, we can proceed analogously  to the proof of
$(1)$ by noting that $l(Z)<h^0(\mathcal{O}_{\mathbb{P}^2}(2k-2))$
and in consequence $h^0(\mathcal{O}_{\mathbb{P}^2}(2k-2)\otimes
I_Z)\neq 0$.
\end{enumerate}
\end{proof}

\begin{Corollary}
\begin{enumerate}
    \item Let $r,c_2$ be integer numbers and $k\in \mathbb{N}$ such that $c_2\geq r^2+2$.
    Then a vector bundle $E\in M_{\mathbb{P}^2}(2,2r,c_2)$ with $S(E)=2k$ exists if and only if $k^2+k+r^2\leq c_2$.
    Furthermore, $E$ fits in an exact sequence
    \begin{eqnarray*}
    0\rightarrow \mathcal{O}_{\mathbb{P}^2}(r-k)\rightarrow E \rightarrow \mathcal{O}_{\mathbb{P}^2}(r+k) \rightarrow 0.
    \end{eqnarray*}
     \item Let $r,c_2$ be integer numbers and $k\in \mathbb{N}$ such that $c_2\geq r^2+1$.
     Then a vector bundle $E\in M_{\mathbb{P}^2}(2,2r-1,c_2)$ with $S(E)=2k-1$ exists if and only if $k^2+r^2-r\leq c_2$.
     Furthermore, $E$ fits in an exact sequence
    \begin{eqnarray*}
    0\rightarrow \mathcal{O}_{\mathbb{P}^2}(r-k)\rightarrow E \rightarrow \mathcal{O}_{\mathbb{P}^2}(r+k-1) \rightarrow 0.
    \end{eqnarray*}
\end{enumerate}
\end{Corollary}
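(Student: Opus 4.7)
The plan is to reduce the corollary to Theorem \ref{TheoremA} by exploiting the invariance of Segre's invariant under twisting by a line bundle. Specifically, given $E \in M_{\mathbb{P}^2}(2; c_1, c_2)$ with $c_1 = 2r$ (resp. $c_1 = 2r-1$), form the twist $E' := E \otimes \mathcal{O}_{\mathbb{P}^2}(-r)$. Then $c_1(E') = 0$ (resp. $c_1(E') = -1$), and $S(E') = S(E)$ since Segre's invariant is preserved under tensoring with line bundles. The case $c_1(E) \in \{2r, 2r-1\}$ then falls directly within the scope of Theorem \ref{TheoremA}.

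First I would carry out the Chern class bookkeeping. Using $c_2(F \otimes L) = c_2(F) + c_1(F)\cdot c_1(L) + c_1(L)^2$ for a rank 2 bundle $F$ and a line bundle $L$, taking $L = \mathcal{O}_{\mathbb{P}^2}(-r)$ I get $c_2(E') = c_2 - r^2$ in case (1), and $c_2(E') = c_2 - r^2 + r$ in case (2). The hypothesis $c_2 \geq r^2+2$ (resp. $c_2 \geq r^2+1$) then ensures the corresponding lower bound on $c_2(E')$ required by Theorem \ref{TheoremA}.

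Next I would translate the existence criteria. By Theorem \ref{TheoremA}(1), $E'$ with $S(E')=2k$ exists iff $k^2+k \leq c_2(E') = c_2-r^2$, which is exactly $k^2+k+r^2 \leq c_2$. Similarly, by Theorem \ref{TheoremA}(2), $E'$ with $S(E')=2k-1$ exists iff $k^2 \leq c_2(E') = c_2-r^2+r$, i.e.\ $k^2+r^2-r \leq c_2$. Moreover, the extensions provided by Theorem \ref{TheoremA} for $E'$,
\[0 \to \mathcal{O}_{\mathbb{P}^2}(-k) \to E' \to \mathcal{O}_{\mathbb{P}^2}(k) \otimes I_Z \to 0\]
(resp. with $\mathcal{O}_{\mathbb{P}^2}(k-1)$ in case (2)), pull back after tensoring with $\mathcal{O}_{\mathbb{P}^2}(r)$ to the desired extensions
\[0 \to \mathcal{O}_{\mathbb{P}^2}(r-k) \to E \to \mathcal{O}_{\mathbb{P}^2}(r+k) \otimes I_Z \to 0\]
and analogously in case (2). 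Maximality of $\mathcal{O}_{\mathbb{P}^2}(r-k) \subset E$ transfers from maximality of $\mathcal{O}_{\mathbb{P}^2}(-k) \subset E'$ under the same twist, again because Segre's invariant is invariant.

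There is no real obstacle here: the statement is essentially a reformulation of Theorem \ref{TheoremA} after normalizing $c_1$ into $\{0,-1\}$, and the only thing to verify is the Chern class arithmetic and that all numerical conditions survive the twist. The mildly delicate point is only to keep careful track of the shift in $c_2$ under twisting so that the inequalities $k^2+k+r^2 \leq c_2$ and $k^2+r^2-r \leq c_2$ come out correctly, together with the required lower bounds on $c_2$ so that Theorem \ref{TheoremA} applies to $E'$.
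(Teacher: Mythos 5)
Your proposal is correct and matches the paper's own proof: the paper likewise reduces to Theorem \ref{TheoremA} via the twist $E \mapsto E \otimes \mathcal{O}_{\mathbb{P}^2}(-r)$, using $S(E(-r))=S(E)$ and the formula $c_2(E(-r))=c_2(E)-rc_1(E)+r^2$, which gives exactly your values $c_2-r^2$ and $c_2-r^2+r$. Your version is if anything slightly more careful, since you retain the factor $I_Z$ in the quotient of the exact sequence.
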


\begin{proof}
The proof follows  from Theorem \ref{TheoremA},  the fact
$S(E(-r))=S(E)$, and the formulas
\begin{eqnarray*}
c_1(E(-r))&=& c_1(E)-2r, \\
c_2(E(-r))&=&c_2(E)-rc_1(E)+r^2.
\end{eqnarray*}
\end{proof}

\section{A Stratification of the moduli space $M_{\mathbb{P}^2}(2,c_1,c_2)$}

In this section  we use the Segre invariant to induce a
stratification of the moduli space  $M_{\mathbb{P}^2}(2;c_1,c_2)$
of stable vector bundles of rank  $2$ and Chern classes $c_1$ and
$c_2$ on $\mathbb{P}^2$.  If $c_1=0$ and $c_2$ is odd (resp.
$c_1=-1$ and $c_2$ even) Le Potier (see for instance \cite[Theorem
14.6.2]{LePotier}) has shown that there exists an universal family
$\mathcal{E}$ parameterized by $M_{\mathbb{P}^2}(2;0,c_2)$ (resp.
$M_{\mathbb{P}^2}(2;-1,c_2)$). If  $c_1=0$ and $c_2$ is even
(resp. $c_1=-1$ and $c_2$ odd) working locally in the \^etale
topology we can assume that there is a family $\mathcal{E}$
parameterized by $M_{\mathbb{P}^2}(2;0,c_2)$ (resp.
$M_{\mathbb{P}^2}(2;-1,c_2)$).

Let $\mathcal{E}$ be a family of rank $2$ vector bundles on
$\mathbb{P}^2$ parameterized by $M_{\mathbb{P}^2}(2;c_1,c_2)$. By
Theorem \ref{semicontinuous}, the function $S:
M_{\mathbb{P}^2}(2;c_1,c_2) \longrightarrow \mathbb{Z}$ induces a
stratification of $M_{\mathbb{P}^2}(2;c_1,c_2)$ into locally
closed subsets
\[M_{\mathbb{P}^2}(2;c_1,c_2;s) := \{E \in M_{\mathbb{P}^2}(2;c_1,c_2) : S(E) = s\}\]
according to the value of $s$.  Without loss of generality we can
assume that if $c_1=0$ (resp. $c_1=-1$) then $s=2k$  for some $k$,
$0< k^2+k \leq c_2$ (resp. $s=-1+2k$ for some $k$, $k^2 \leq
c_2$).

\begin{Theorem} \label{dimensionstratum}
\begin{enumerate}
\item Let $c_2 \geq 2$,  $k \in \mathbb{N}$ such that $k^2+k \leq c_2$. Then $M_{\mathbb{P}^2}(2;0,c_2;2k)$
is an irreducible variety of dimension:
\[\begin{cases}
3c_2+k^2+3k-2, & \text{if $c_2 > k^2+3k+1$} \\
4c_2-3, & \text{if $c_2 \leq  k^2+3k+1$.} \\
\end{cases} \]
\item Let $c_2 \geq 1$,  $k \in \mathbb{N}$ such that $k^2 \leq c_2$.
Then $M_{\mathbb{P}^2}(2;-1,c_2;-1+2k)$ is an irreducible variety
of dimension
\[\begin{cases}
3c_2+k^2+2k-4, & \text{if $c_2 > k^2+2k$} \\
4c_2-4, & \text{if $c_2 \leq  k^2+2k$.} \\
\end{cases} \]
\end{enumerate}
\end{Theorem}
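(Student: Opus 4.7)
The plan is to realise the stratum as the image of an irreducible incidence variety of extensions provided by Theorem \ref{TheoremA}, and then compute dimensions via a parameter count. For part (1), every $E\in M_{\mathbb{P}^2}(2;0,c_2;2k)$ fits in an extension with quotient $\mathcal{O}_{\mathbb{P}^2}(k)\otimes I_Z$ and with the maximal kernel $\mathcal{O}_{\mathbb{P}^2}(-k)$, where $Z$ is a local complete intersection zero-cycle of length $c_2+k^2$ not lying on a curve of degree $2k-1$. I would therefore form the incidence variety
\begin{equation*}
\mathcal{T}=\{(Z,[\xi]):Z\in\mathcal{H},\ [\xi]\in\mathbb{P}(\operatorname{Ext}^{1}(\mathcal{O}_{\mathbb{P}^2}(k)\otimes I_Z,\mathcal{O}_{\mathbb{P}^2}(-k))),\ E_\xi\text{ locally free}\}
\end{equation*}
over an open subscheme $\mathcal{H}\subset\operatorname{Hilb}^{c_2+k^2}(\mathbb{P}^2)$, together with the forgetful map $\Phi\colon\mathcal{T}\to M_{\mathbb{P}^2}(2;0,c_2;2k)$, $(Z,[\xi])\mapsto[E_\xi]$, which is surjective by Theorem \ref{TheoremA}(1).

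To compute $\dim\mathcal{T}$ I would use that $\operatorname{Hilb}^{c_2+k^2}(\mathbb{P}^2)$ is irreducible of dimension $2(c_2+k^2)$, and that Serre duality yields $\operatorname{Ext}^{1}(\mathcal{O}_{\mathbb{P}^2}(k)\otimes I_Z,\mathcal{O}_{\mathbb{P}^2}(-k))\cong H^{1}(I_Z(2k-3))^{\vee}$. From the sequence $0\to I_Z(2k-3)\to\mathcal{O}_{\mathbb{P}^2}(2k-3)\to\mathcal{O}_Z\to 0$ together with the vanishing $h^{0}(I_Z(2k-3))=0$ for generic $Z$ (valid because the standing bound $k^{2}+k\le c_2$ gives $l(Z)\ge h^{0}(\mathcal{O}_{\mathbb{P}^2}(2k-3))$), one obtains $h^{1}(I_Z(2k-3))=c_2-k^{2}+3k-1$, so $\mathcal{T}$ is (generically) a projective bundle over $\mathcal{H}$ and thus irreducible of total dimension $3c_2+k^{2}+3k-2$. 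Irreducibility of the stratum then follows as the image of $\mathcal{T}$ under $\Phi$, and its dimension equals $\dim\mathcal{T}$ minus the dimension of a generic fibre of $\Phi$. A fibre over $[E]$ is $\mathbb{P}(H^{0}(E(k)))$ (the maximal embeddings $\mathcal{O}_{\mathbb{P}^2}(-k)\hookrightarrow E$ taken up to scalar), and twisting the defining extension by $\mathcal{O}_{\mathbb{P}^2}(k)$ yields $h^{0}(E(k))=1+h^{0}(\mathcal{O}_{\mathbb{P}^2}(2k)\otimes I_Z)=1+\max(0,\,k^{2}+3k+1-c_2)$ for generic $Z$. When $c_2>k^{2}+3k+1$ the generic fibre is a point, giving $\dim M_{\mathbb{P}^2}(2;0,c_2;2k)=3c_2+k^{2}+3k-2$; when $c_2\le k^{2}+3k+1$ the generic fibre has dimension $k^{2}+3k+1-c_2$, and subtracting gives the uniform $4c_2-3$, which equals $\dim M_{\mathbb{P}^2}(2;0,c_2)$. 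Part (2) is handled by the identical recipe starting from Theorem \ref{TheoremA}(2), with $l(Z)=c_2+k^{2}-k$, the parallel identification $\operatorname{Ext}^{1}\cong H^{1}(I_Z(2k-4))^{\vee}$ of generic dimension $c_2-k^{2}+4k-3$, and fibre count $h^{0}(E(k))=1+\max(0,\,k^{2}+2k-c_2)$, yielding the stated formulas.

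The chief technical obstacle will be enforcing the genericity hypotheses so that the Ext groups organise into a vector bundle on a dense open of $\mathcal{H}$ and $\Phi$ has constant fibre dimension there. Concretely one must show that $h^{0}(I_Z(2k-3))=0$ and that $h^{0}(I_Z(2k))$ attains its expected value on a dense open locus---a general-position argument on the Hilbert scheme---and that the subset of $\operatorname{Ext}^{1}$ corresponding to locally free extensions is open and non-empty, which is precisely where the Cayley--Bacharach criterion of Theorem \ref{Cayley-Bacharach}, already invoked in the proof of Theorem \ref{TheoremA}, is needed.
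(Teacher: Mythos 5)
Your proposal is correct and follows essentially the same route as the paper: the paper also realises the stratum as the image of the projectivised relative $\operatorname{Ext}^1$-bundle over the open locus of the Hilbert scheme where $h^0(\mathcal{O}_{\mathbb{P}^2}(2k-3)\otimes I_Z)=0$ (using G\"ottsche's universal extension and Theorem \ref{TheoremA} for non-emptiness and openness), computes $\dim\operatorname{Ext}^1=c_2-k^2+3k-1$ by the same Serre-duality argument, and subtracts the fibre dimension $\dim\mathbb{P}H^0(E(k))$ with the identical generic value $h^0(E(k))=1+h^0(\mathcal{O}_{\mathbb{P}^2}(2k)\otimes I_Z)$. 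All your numerical outputs, including those for part (2), agree with the paper's.
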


\begin{proof}
We only prove $(1)$, the proof of $(2)$ being quite analogous. Let
$c_2 \geq 2$, $k \in \mathbb{Z}$ such that $k^2+k \leq c_2$.  Let
$Hilb^{l}(\mathbb{P}^2)$ be the Hilbert scheme of zero-dimensional
subschemes of length $l= c_2 + k^2$ on $\mathbb{P}^2$ and let
$\mathcal{I}_{\mathcal{Z}_l}$ be the ideal sheaf of the universal
subscheme $\mathcal{Z}_l$ in $\mathbb{P}^2 \times
Hilb^{l}(\mathbb{P}^2)$.  Let $\mathcal{O}_{\mathbb{P}^2}(-k),
\mathcal{O}_{\mathbb{P}^2}(k)$ be line bundles on $\mathbb{P}^2$.
Let $p_1$, $p_2$ be the projections of $\mathbb{P}^2 \times
Hilb^{l}(\mathbb{P}^2)$ on $\mathbb{P}^2$ and
$Hilb^{l}(\mathbb{P}^2)$ respectively. Consider on $\mathbb{P}^2
\times Hilb^{l}(\mathbb{P}^2)$ the sheaf
$\mathcal{H}om(p_{1}^*\mathcal{O}_{\mathbb{P}^2}(k) \otimes
\mathcal{I}_{\mathcal{Z}_l},
p_{1}^*\mathcal{O}_{\mathbb{P}^2}(-k))$. Taking higher direct
image we obtain on $Hilb^{l}(\mathbb{P}^2)$ the sheaf:
\[ R^1_{p_{{2}_*}}\mathcal{H}om(p_{1}^*\mathcal{O}_{\mathbb{P}^2}(k)
\otimes \mathcal{I}_{\mathcal{Z}_l},
p_{1}^*\mathcal{O}_{\mathbb{P}^2}(-k).\]

From the semicontinuity Theorem \cite[Theorem 12.8]{Hartshorne1}
we have that the set:
\[H:=\{Z \in Hilb^l(\mathbb{P}^2) : h^0(\mathcal{O}_{\mathbb{P}^2}(2k-3) \otimes I_Z) < 1\}\]
is an open set of $Hilb^l(\mathbb{P}^2)$ which is non-empty by
Theorem \ref{TheoremA}. Restricting the sheaf
\[R^1_{p_{{2}_*}}\mathcal{H}om(p_{1}^*\mathcal{O}_{\mathbb{P}^2}(k)
\otimes \mathcal{I}_{\mathcal{Z}_l},
p_{1}^*\mathcal{O}_{\mathbb{P}^2}(-k))\] to $H$ we have that it is
locally free  because
\[H^0(\mathcal{H}om(\mathcal{O}_{\mathbb{P}^2}(k) \otimes
\mathcal{I}_{\mathcal{Z}_l},\mathcal{O}_{\mathbb{P}^2}(-k))) \cong
Hom(\mathcal{O}_{\mathbb{P}^2}(k) \otimes \mathcal{I}_Z,
\mathcal{O}_{\mathbb{P}^2}(-k))=0,\] and \[\dim \,
Ext^2(\mathcal{O}_{\mathbb{P}^2}(k)\otimes I_Z,
\mathcal{O}_{\mathbb{P}^2}(-k)) =
h^0(\mathcal{O}_{\mathbb{P}^2}(2k-3)\otimes I_Z)=0\] for any $Z
\in H$. Hence, the fiber over $Z \in H$ is
$Ext^1(\mathcal{O}_{\mathbb{P}^2}(k)\otimes I_Z,
\mathcal{O}_{\mathbb{P}^2}(-k))$.

Consider on $H$ the sheaf:
\[ \mathbb{P}\Gamma:=\mathbb{P} R^1_{p_{{2}_*}}\mathcal{H}om(p_{1}^*\mathcal{O}_{\mathbb{P}^2}(k)
\otimes \mathcal{I}_{\mathcal{Z}_l},
p_{1}^*\mathcal{O}_{\mathbb{P}^2}(-k)).\] By \cite[Lemma
3.2]{Gottsche} there exists an exact sequence:
\begin{equation} \label{univextension2}
0 \to (id\times\pi)^*p_{1}^*\mathcal{O}_{\mathbb{P}^2}(-k) \otimes
\mathcal{O}_{\mathbb{P}^2 \times \mathbb{P}\Gamma}(1) \to
\mathcal{E} \to
(id\times\pi)^*(p_{1}^*\mathcal{O}_{\mathbb{P}^2}(-k) \otimes
\mathcal{I}_{\mathcal{Z}_l}) \to 0
\end{equation}
on $\mathbb{P}^2 \times \mathbb{P}\Gamma$ such that for each $p
\in \mathbb{P}\Gamma$ the restriction $\mathcal{E}_{|_p}$ of
$\mathcal{E}$ to $\mathbb{P}^2 \times \{p\}$ is isomorphic to an
extension
\[0 \longrightarrow \mathcal{O}_{\mathbb{P}^2}(-k)  \longrightarrow E  \longrightarrow
\mathcal{O}_{\mathbb{P}^2}(k) \otimes I_Z \longrightarrow 0.\]
Define the set
\[U:= \{p \in  \mathbb{P}\Gamma  : \text{$\mathcal{E}_{|_p}$ is stable and $S(\mathcal{E}_p)=2k$}\}.\]
From Theorem \ref{TheoremA}, the lower semicontinuity of the
function $S$   and the fact that stability is an open condition we
conclude that the set $U$ is non-empty and open in $\mathbb{P}
\Gamma$. Restricting the sequence (\ref{univextension2}) to
$\mathbb{P}^2 \times U$  we have, from the universal property of
the moduli space $M_{\mathbb{P}^2}(2;0,c_2)$, a morphism
\[f_s:U \longrightarrow M_{\mathbb{P}^2}(2;0,c_2)\]
where $f_s(U)$ is precisely the stratum
$M_{\mathbb{P}^2}(2;0,c_2;2k)$. Hence,
$M_{\mathbb{P}^2}(2;0,c_2;2k)$,  being the image of an irreducible
variety under a morphism is irreducible.

We can now determine the dimension of
\[
 \dim\, M_{\mathbb{P}^2}(2;0,c_2;2k)  = \dim \, U - \dim f_s^{-1}(E) \]
for general $E$, which is equal to:
 \[
\dim\, M_{\mathbb{P}^2}(2;0,c_2;2k)  = \dim \, H + \dim \, Ext^1(
\mathcal{O}_{\mathbb{P}^2}(k)\otimes I_Z,
\mathcal{O}_{\mathbb{P}^2}(-k)) - \dim \,\mathbb{P}H^0(E(k))-1.
\]
Since $H$ is an open set of $Hilb^l(\mathbb{P}^2)$ we have:
 \begin{equation}\label{dimension}
\begin{aligned} \dim\, M_{\mathbb{P}^2}(2;0,c_2;2k)   = & \dim \,
Hilb^l(\mathbb{P}^2) +  \dim \, Ext^1(
\mathcal{O}_{\mathbb{P}^2}(k)\otimes I_Z,
\mathcal{O}_{\mathbb{P}^2}(-k)) \\ & - \dim
\mathbb{P}H^0(E(k))-1,\end{aligned}
\end{equation}

We now compute the values of \[\dim \, Ext^1(
\mathcal{O}_{\mathbb{P}^2}(k)\otimes I_Z,
\mathcal{O}_{\mathbb{P}^2}(-k)) \text{ and }\dim \,
\mathbb{P}H^0(E(k)),\] where $Z \in H$.

Note that by Serre duality
$Ext^1(\mathcal{O}_{\mathbb{P}^2}(k)\otimes I_Z,
\mathcal{O}_{\mathbb{P}^2}(-k))$ is canonically dual to \\
$Ext^1(\mathcal{O}_{\mathbb{P}^2}(-k),
\mathcal{O}_{\mathbb{P}^2}(k-3)\otimes I_Z)$.  Since
$\mathcal{O}_{\mathbb{P}^2}(-k)$ is locally free, then
\[Ext^1(\mathcal{O}_{\mathbb{P}^2}(-k),
\mathcal{O}_{\mathbb{P}^2}(k-3)\otimes I_Z) \cong
H^1(\mathcal{O}_{\mathbb{P}^2}(2k-3) \otimes I_Z ).\]

By the exact sequence
\[0 \longrightarrow \mathcal{O}_{\mathbb{P}^2}(2k-3) \otimes I_Z  \longrightarrow \mathcal{O}_{\mathbb{P}^2}(2k-3)
\longrightarrow \mathcal{O}_Z \longrightarrow 0\]
we have that
\[
  h^1(\mathcal{O}_{\mathbb{P}^2}(2k-3) \otimes I_Z)= - h^0(\mathcal{O}_{\mathbb{P}^2}(2k-3) ) + h^0(\mathcal{O}_Z),\]
because $Z \in H$ and  therefore:
 \begin{equation}\label{Ext1}
 h^1(\mathcal{O}_{\mathbb{P}^2}(2k-3) \otimes I_Z) = c_2-k^2+3k-1.
 \end{equation}

Now, we compute $h^0(E(k))$. Since $E \in
M_{\mathbb{P}^2}(2;0,c_2;2k)$ it can be written in an extension
\begin{equation}
    0 \longrightarrow \mathcal{O}_{\mathbb{P}^2}(-k) \longrightarrow E \longrightarrow \mathcal{O}_{\mathbb{P}^2}(k)\otimes I_Z \longrightarrow 0
\end{equation}
from which we get:
\[0 \longrightarrow \mathcal{O}_{\mathbb{P}^2} \longrightarrow E(k) \longrightarrow \mathcal{O}_{\mathbb{P}^2}(2k)\otimes I_Z \longrightarrow 0.\]
Therefore,
\[
h^0(E(k))  = 1 + h^0(\mathcal{O}_{\mathbb{P}^2}(2k)\otimes I_Z).
\] It follows that for $Z\in H$ general:
\begin{equation}\label{fiber}
h^0(E(k))  = \begin{cases}
1, &\text{if $c_2> k^2+3k+1$} \\
 k^2+3k-c_2+2, &\text{if $c_2 \leq k^2+3k+1.$}
 \end{cases}
\end{equation}
Replacing (\ref{Ext1}) and (\ref{fiber}) in (\ref{dimension}) we
have
\begin{align*}
 \dim\, M_{\mathbb{P}^2}(2;0,c_2;2k) = \begin{cases}
3c_2+k^2+3k-2, & \text{if $c_2 > k^2+3k+1$} \\
4c_2-3, & \text{if $c_2 \leq  k^2+3k+1$} \\
\end{cases}
\end{align*}
which proves the theorem.
\end{proof}

\begin{Corollary} \label{dimensionstratumgeneral}
\begin{enumerate}
\item Let $c_2, r\in\mathbb{Z}$ and $ k \in \mathbb{N}$ such that $r^2+k^2+k \leq c_2$. Then $M_{\mathbb{P}^2}(2;2r,c_2;2k)$
is an irreducible variety of dimension
\[\begin{cases}
3c_2-3r^2+k^2+3k-2, & \text{if }c_2 > r^2+k^2+3k+1 \\
4c_2-4r^2-3, & \text{if } c_2 \leq r^2+ k^2+3k+1. \\
\end{cases} \]
\item Let $c_2$,  $k \in \mathbb{N}$ and $r\in \mathbb{Z}$ such that $r^2-r +k^2 \leq c_2$.
Then $M_{\mathbb{P}^2}(2;2r-1,c_2;2k-1)$ is an irreducible variety
of dimension
\[\begin{cases}
3c_2+3r-3r^2+k^2+2k-4, & \text{if $c_2 > r^2 -r +k^2+2k$} \\
4c_2+4r -4r^2-4, & \text{if $c_2 \leq r^2-r +k^2+2k$.} \\
\end{cases} \]
\end{enumerate}
\end{Corollary}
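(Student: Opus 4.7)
The plan is to reduce Corollary \ref{dimensionstratumgeneral} to Theorem \ref{dimensionstratum} via the twist-by-line-bundle map, exploiting the invariance properties of the Segre stratification under tensoring.

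First I would observe that for any $r \in \mathbb{Z}$, the functor $E \mapsto E(-r) = E \otimes \mathcal{O}_{\mathbb{P}^2}(-r)$ induces an isomorphism of moduli spaces
\[ M_{\mathbb{P}^2}(2; c_1, c_2) \xrightarrow{\sim} M_{\mathbb{P}^2}(2; c_1 - 2r,\, c_2 - r c_1 + r^2), \]
using the Chern class formulas stated after Corollary 3.4, and that this isomorphism preserves the Segre invariant since $S(E(-r)) = S(E)$. Consequently, the isomorphism restricts to an isomorphism between the corresponding Segre strata, and so it suffices to check that the hypotheses and dimension formulas in Corollary \ref{dimensionstratumgeneral} agree with those of Theorem \ref{dimensionstratum} after this Chern class substitution.

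For part (1), applying the twist with $c_1 = 2r$ produces
\[ M_{\mathbb{P}^2}(2; 2r, c_2; 2k) \;\cong\; M_{\mathbb{P}^2}(2; 0,\, c_2 - r^2;\, 2k). \]
The existence hypothesis $r^2 + k^2 + k \leq c_2$ becomes precisely $k^2 + k \leq c_2 - r^2$, which is the hypothesis of Theorem \ref{dimensionstratum}(1) applied to the second moduli space. Substituting $c_2 \mapsto c_2 - r^2$ into the two dimension formulas of Theorem \ref{dimensionstratum}(1), and rewriting the case split $c_2 - r^2 > k^2 + 3k + 1$ as $c_2 > r^2 + k^2 + 3k + 1$, yields exactly the claimed values $3c_2 - 3r^2 + k^2 + 3k - 2$ and $4c_2 - 4r^2 - 3$. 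Irreducibility is transported by the isomorphism.

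For part (2), the same device with $c_1 = 2r - 1$ gives
\[ M_{\mathbb{P}^2}(2; 2r - 1, c_2; 2k - 1) \;\cong\; M_{\mathbb{P}^2}(2; -1,\, c_2 - r^2 + r;\, 2k - 1), \]
so the hypothesis $r^2 - r + k^2 \leq c_2$ becomes $k^2 \leq c_2 - r^2 + r$, matching Theorem \ref{dimensionstratum}(2). Substituting $c_2 \mapsto c_2 - r^2 + r$ into the two dimension formulas of that theorem gives the stated $3c_2 + 3r - 3r^2 + k^2 + 2k - 4$ and $4c_2 + 4r - 4r^2 - 4$, with the case split $c_2 - r^2 + r > k^2 + 2k$ rewriting to $c_2 > r^2 - r + k^2 + 2k$. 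There is no serious obstacle; the entire argument is the standard reduction by a Picard twist, and the only thing to verify carefully is the bookkeeping on the Chern class substitution, which is routine.
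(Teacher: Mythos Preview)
Your proposal is correct and follows essentially the same approach as the paper: both reduce to Theorem \ref{dimensionstratum} via the isomorphism $E \mapsto E(-r)$, using $S(E)=S(E(-r))$ together with the Chern class formulas $c_1(E(-r))=c_1(E)-2r$ and $c_2(E(-r))=c_2(E)-rc_1(E)+r^2$. Your write-up is in fact more detailed in checking that the hypotheses and case splits transform correctly, but the idea is identical.
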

\begin{proof}
For $(1)$, since $S(E)=S(E\otimes
\mathcal{O}_{\mathbb{P}^2}(-r))$, it follows that the map
\begin{eqnarray*}
M_{\mathbb{P}^2}(2;2r,c_2;2k)&\rightarrow&M_{\mathbb{P}^2}(2;0,c_2-r^2;2k)\\
E &\mapsto& E\otimes \mathcal{O}_{\mathbb{P}^2}(-r)
\end{eqnarray*}
is an isomorphism. Part (2) follows analogously.
\end{proof}

\begin{Corollary}\label{maximalstrata}
\begin{enumerate}
    \item  Let $c_2\geq r^2+2$ and $k\in\mathbb{N}$ the only integer such that $r^2+k^2+k\leq c_2\leq r^2+k^2+3k+1$.
    Then the stratum $M_{\mathbb{P}^2}(2;2r,c_2;2k)$ has the same dimension as the moduli space $M_{\mathbb{P}^2}(2;2r,c_2)$.
\item Let $c_2\geq r^2-r + 1$ and $k\in\mathbb{N}$ the only integer such that $r^2-r+k^2\leq c_2\leq r^2+ k^2+2k$.
Then the stratum $M_{\mathbb{P}^2}(2;2r-1,c_2;-1+2k)$ has the same
dimension as the moduli space $M_{\mathbb{P}^2}(2;2r-1,c_2)$
\end{enumerate}
\end{Corollary}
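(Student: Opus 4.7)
The proof is a routine but satisfying calculation, amounting to a direct comparison of dimension formulas. The key observation is that the numerical conditions on $c_2$ in the hypothesis are designed precisely so that the dimension formula of Corollary \ref{dimensionstratumgeneral} falls into the ``small $c_2$'' branch (the one independent of $k$), and this value coincides with the dimension of the full moduli space.

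First I would verify that the integer $k$ asserted in the hypothesis is uniquely determined. For part (1), setting $m := c_2 - r^2 \geq 2$, the asserted condition $r^2 + k^2 + k \leq c_2 \leq r^2 + k^2 + 3k + 1$ reduces to $k^2 + k \leq m \leq k^2 + 3k + 1$. Since $(k+1)^2 + (k+1) = k^2 + 3k + 2$, the intervals $[k^2 + k,\, k^2 + 3k + 1]$ for $k = 1, 2, \ldots$ partition $[2, \infty)$ without overlap or gap, so $k$ exists and is unique. An analogous computation for part (2), using $(k+1)^2 = k^2 + 2k + 1$, shows that the corresponding intervals partition the required range, so $k$ is again uniquely determined.

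Next I would apply Corollary \ref{dimensionstratumgeneral}. The upper bound on $c_2$ forces the second case of the formula. For part (1), this gives $\dim M_{\mathbb{P}^2}(2; 2r, c_2; 2k) = 4c_2 - 4r^2 - 3$; for part (2), it gives $\dim M_{\mathbb{P}^2}(2; 2r - 1, c_2; -1 + 2k) = 4c_2 + 4r - 4r^2 - 4$. These are now compared with the standard formula for the dimension of the moduli space of rank $2$ vector bundles on a rational surface, recalled at the end of Section 2, namely $\dim M_{\mathbb{P}^2}(2, c_1, c_2) = 4c_2 - c_1^2 - 3$. Substituting $c_1 = 2r$ yields $4c_2 - 4r^2 - 3$, matching part (1); substituting $c_1 = 2r - 1$ yields $4c_2 - (2r-1)^2 - 3 = 4c_2 - 4r^2 + 4r - 4$, matching part (2). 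Thus in both cases the stratum has full dimension.

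The only potential ``obstacle'' is the elementary verification that the numerical intervals tile the half-line correctly, ensuring the uniqueness of $k$ asserted in the hypothesis; there is no further geometric content beyond what is already established in Theorem \ref{dimensionstratum} and Corollary \ref{dimensionstratumgeneral}. In particular, no new existence argument or parameter count is needed — the statement is best viewed as a clean repackaging of the case analysis carried out in the dimension computation.
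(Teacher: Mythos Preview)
Your proposal is correct and matches the paper's implicit argument: the corollary is stated without proof because it follows immediately from Corollary~\ref{dimensionstratumgeneral} by selecting the second branch of the dimension formula and comparing with $\dim M_{\mathbb{P}^2}(2,c_1,c_2)=4c_2-c_1^2-3$, which is exactly what you do. Your verification of the uniqueness of $k$ via the tiling of intervals is also precisely the ``elementary considerations'' alluded to in the Remark immediately following the corollary.
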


\begin{Remark} \begin{enumerate}
\item The uniqueness of the number $k$ satisfying the above
inequalities follows by elementary considerations. It is, indeed,
equal to the largest $k$ such that $r^2+k^2+k\leq c_2$ in (1) and
such that $r^2-r+k^2\leq c_2$ in part (2).
\item It follows by semicontinuity that the stratum with
maximal dimension is an open set in the moduli space of stable
vector bundles. \end{enumerate}
\end{Remark}

\section{Applications to Brill-Noether Theory}

In this section, we use the previous results to study the
non-emptiness of some  Brill-Noether loci in the moduli space
$M_{\mathbb{P}^2}(2;c_1,c_2)$ of stable vector bundles of  rank
$2$ and fixed Chern classes $c_1$ and $c_2$ on $\mathbb{P}^2$.

For any $t \geq 0$ the subvariety of $M_{\mathbb{P}^2}(2;c_1,c_2)$
defined as
\[W^t(2;c_1,c_2) := \{E \in M_{\mathbb{P}^2}(2;c_1,c_2) : h^0(E) \geq t\}\]
is called the \textit{$t-$Brill-Noether locus} of the moduli space
$M_{\mathbb{P}^2}(2;c_1,c_2)$ (or simply Brill-Noether locus if
there is no confusion).

The following theorem yields information about the variety
$W^t(2;c_1,c_2)$, in particular shows that $W^t(2;c_1,c_2)$ is a
determinantal variety and give a formula for the expected
dimension.  It was proved by Costa and Miro-Roig in
\cite{Costa-Miro-Roig} for every smooth projective variety of
dimension $n$.

\begin{Theorem}\cite[Corollary 2.8]{Costa-Miro-Roig} \label{determinantalvariety}
 Let $M_{\mathbb{P}^2}(2;c_1,c_2)$ be the moduli space of stable vector bundles of rank $2$ on $\mathbb{P}^2$ with fixed Chern classes $c_1$, $c_2$.
 Then, for any $t \geq 0$, there exists a determinantal variety
 \[W^t(2;c_1,c_2):= \{E \in M_{\mathbb{P}^2}(2;c_1,c_2) : h^0(E) \geq t\}.\]
 Moreover, each non-empty irreducible component of $W^t(2;c_1,c_2)$ has dimension greater or equal to the Brill-Noether number on $\mathbb{P}^2$
 \[\rho^t(2;c_1,c_2) := 4c_2-c_1^2-3 - t\left(t-\frac{c_1^2}{2}-\frac{3c_1}{2}+c_2-2\right)\]
 and
 \[W^{t+1}(2;c_1,c_2) \subset Sing(W^t(2;c_1,c_2))\]
 whenever $W^t(2;c_1,c_2) \neq M_{\mathbb{P}^2}(2;c_1,c_2)$.
\end{Theorem}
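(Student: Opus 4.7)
The plan is to exhibit $W^t(2;c_1,c_2)$ as the degeneracy locus of a morphism between locally free sheaves on (an étale cover of) the moduli space, and then invoke the classical theory of determinantal subvarieties. First, by boundedness of the moduli functor one can fix an integer $m\gg 0$ and a line $L\subset \mathbb{P}^2$ such that, setting $D=mL$, the vanishings $h^i(E(D))=0$ and $h^i(E(D)|_D)=0$ hold for every $[E]\in M:=M_{\mathbb{P}^2}(2;c_1,c_2)$ and every $i\geq 1$. As noted at the beginning of Section 4, a (quasi-)universal family $\mathcal{E}$ exists locally in the étale topology on $M$; the construction below depends on $E$ only through the intrinsic invariant $h^0(E)$, and therefore glues via étale descent.

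On $\mathbb{P}^2\times M$ I would consider the exact sequence
$$0 \longrightarrow \mathcal{E} \longrightarrow \mathcal{E}(p_1^*D) \longrightarrow \mathcal{E}(p_1^*D)|_{D\times M} \longrightarrow 0.$$
Pushing forward by $p_2$ and using cohomology and base change (the higher direct images vanish by the choice of $D$) produces two locally free sheaves
$$F_0 := p_{2*}\mathcal{E}(p_1^*D),\qquad F_1 := p_{2*}\bigl(\mathcal{E}(p_1^*D)|_{D\times M}\bigr),$$
together with a morphism $\phi\colon F_0 \to F_1$ whose fibre at $[E]$ is the restriction map $H^0(E(D))\to H^0(E(D)|_D)$. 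The vanishing $h^1(E(D))=0$ forces $\ker(\phi_{[E]})= H^0(E)$, so $W^t(2;c_1,c_2)$ is precisely the locus where $\mathrm{rk}(\phi)\leq \mathrm{rk}(F_0)-t$, i.e.\ the scheme-theoretic vanishing locus of the appropriate minors of $\phi$. This realizes $W^t$ as a determinantal subvariety.

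To extract the expected dimension I would compute $\mathrm{rk}(F_0)-\mathrm{rk}(F_1)=\chi(E)$, which by Hirzebruch--Riemann--Roch on $\mathbb{P}^2$ equals $\tfrac{c_1^2}{2}+\tfrac{3c_1}{2}-c_2+2$. The general codimension bound for degeneracy loci of morphisms between locally free sheaves then gives
$$\mathrm{codim}_{M}\,W^t \;\leq\; t\bigl(t-\chi(E)\bigr) \;=\; t\Bigl(t-\tfrac{c_1^2}{2}-\tfrac{3c_1}{2}+c_2-2\Bigr),$$
which, together with $\dim M = 4c_2-c_1^2-3$, yields exactly the announced lower bound $\rho^t(2;c_1,c_2)$ on the dimension of any non-empty irreducible component. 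The inclusion $W^{t+1}\subset \mathrm{Sing}(W^t)$, when $W^t\neq M$, is an instance of the standard fact that a further drop in rank forces the defining minors of $\phi$ to vanish to higher order.

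The main obstacle is the non-existence of a globally defined universal sheaf when $\gcd(2,c_1)\neq 1$; the hard part is to ensure that the locally constructed morphism $\phi$ descends in a way that preserves the determinantal structure. This is handled exactly as in Section 4: one passes to an étale cover of $M$ on which a quasi-universal family exists, and observes that the minors of $\phi$ determine the same closed subscheme on overlaps because their zero loci coincide with the intrinsically defined condition $h^0(E)\geq t$. The dimension bound and the singular-locus statement, being étale-local on $M$, descend without loss.
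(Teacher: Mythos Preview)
The paper does not supply its own proof of this statement: it is quoted verbatim as \cite[Corollary~2.8]{Costa-Miro-Roig} and used as a black box in Section~5. So there is no ``paper's proof'' to compare against.

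Your argument is the standard one and is essentially correct; it is, in outline, exactly what Costa and Mir\'o-Roig do (for arbitrary smooth projective varieties). A couple of minor points worth tightening. First, in the \'etale-descent step you justify the gluing of the determinantal structure by saying the zero loci agree set-theoretically with the intrinsic condition $h^0(E)\ge t$; that alone does not pin down a scheme structure. The clean way to say it is that the ideal sheaf you produce is a Fitting ideal of $R^\bullet p_{2*}\mathcal{E}$, and Fitting ideals are independent of the chosen two-term resolution, hence intrinsic and compatible on overlaps. Second, the codimension bound you quote for degeneracy loci requires $M$ to be (locally) Cohen--Macaulay in order to conclude that \emph{every} irreducible component has dimension at least $\rho^t$; for $M_{\mathbb{P}^2}(2;c_1,c_2)$ this is fine since the moduli space is smooth, but it is worth stating that hypothesis explicitly. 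With those two clarifications your proof is complete.
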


The following result allows to  establish a relationship between
the Brill-Noether locus $W^t(2;c_1,c_2)$ and the different strata
$M_{\mathbb{P}^2}(2;c_1,c_2;s)$.

\begin{Theorem}  \label{sections} Let $r,k, c_2$ be integers. Assume, \[t=\frac{(r-k+2)(r-k+1)}{2}.\] \begin{enumerate}
    \item Let $E \in M_{\mathbb{P}^2}(2;2r,c_2;2k)$. Then $ E\notin W^1(2;2r,c_2)$ if $r<k$ and  $E \in W^t(2;2r,c_2)$ if $r \geq k$.
    Moreover, the Brill-Noether number $\rho^t(2,2r,c_2)<\dim M_{\mathbb{P}^2}(2,2r,c_2;2k)$ for $c_2>>0.$

    \item Let  $E \in M_{\mathbb{P}^2}(2;2r-1,c_2;-1+2k)$. Then $ E\notin W^1(2;2r-1,c_2)$ if $r<k$ and  $E \in W^t(2;2r-1,c_2)$
    if $r \geq k$. Moreover, the Brill-Noether number $\rho^t(2,2r-1,c_2)<\dim M_{\mathbb{P}^2}(2,2r,c_2;2k-1)$ for $c_2>>0.$
\end{enumerate}
\end{Theorem}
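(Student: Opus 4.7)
The plan is to read $h^0(E)$ directly off the canonical extension attached to each $E$ in the stratum by Theorem \ref{TheoremA}, and then pit that against the formulas already in hand.

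For $(1)$: tensoring the sequence of Theorem \ref{TheoremA}(1) with $\mathcal{O}_{\mathbb{P}^2}(r)$, every $E\in M_{\mathbb{P}^2}(2;2r,c_2;2k)$ fits in
\[0\longrightarrow \mathcal{O}_{\mathbb{P}^2}(r-k)\longrightarrow E\longrightarrow \mathcal{O}_{\mathbb{P}^2}(r+k)\otimes I_Z\longrightarrow 0\]
with $\mathcal{O}_{\mathbb{P}^2}(r-k)\subset E$ maximal. Passing to global sections gives an injection $H^0(\mathcal{O}_{\mathbb{P}^2}(r-k))\hookrightarrow H^0(E)$. When $r\geq k$, a direct count yields $h^0(\mathcal{O}_{\mathbb{P}^2}(r-k))=\binom{r-k+2}{2}=t$, so $E\in W^t(2;2r,c_2)$. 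When $r<k$, I would argue by contradiction using maximality: a non-zero section $\mathcal{O}_{\mathbb{P}^2}\to E$ saturates to a sub-line bundle $\mathcal{O}_{\mathbb{P}^2}(a)\subset E$ with $a\geq 0$, but the maximality of $\mathcal{O}_{\mathbb{P}^2}(r-k)$ forces $a\leq r-k<0$, a contradiction; hence $h^0(E)=0$ and $E\notin W^1$. Part $(2)$ follows verbatim by applying the odd-$c_1$ sequence of Theorem \ref{TheoremA}(2) tensored with $\mathcal{O}_{\mathbb{P}^2}(r)$, which again produces $h^0(\mathcal{O}_{\mathbb{P}^2}(r-k))=t$ when $r\geq k$ and $h^0(E)=0$ when $r<k$.

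For the Brill-Noether comparison I would just weigh leading orders in $c_2$. Substituting $c_1=2r$ into the formula from Theorem \ref{determinantalvariety} gives
\[\rho^t(2;2r,c_2)\;=\;(4-t)c_2 + O_{r,k}(1),\]
whereas Corollary \ref{dimensionstratumgeneral}(1) gives $\dim M_{\mathbb{P}^2}(2;2r,c_2;2k)=3c_2+O_{r,k}(1)$ in the regime $c_2>r^2+k^2+3k+1$. As soon as $r>k$ one has $t\geq 3$, so the linear coefficient of $\rho^t$ drops to at most $1<3$, and the strict inequality $\rho^t<\dim M_{\mathbb{P}^2}(2;2r,c_2;2k)$ is immediate for $c_2\gg 0$; the odd case is identical after substituting $c_1=2r-1$ and using Corollary \ref{dimensionstratumgeneral}(2). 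I do not foresee any real obstacle: the only two places where care is needed are the saturation step in the $r<k$ argument (to correctly reduce an arbitrary section to a saturated line sub-bundle without altering the Segre-invariant inequality) and the verification that, for fixed $r,k$ and $c_2\to\infty$, one is in the first (not the second) branch of Corollary \ref{dimensionstratumgeneral} so that the asymptotic comparison is legitimate.
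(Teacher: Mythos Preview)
Your proposal is correct and largely parallels the paper's argument: both tensor the canonical extension of Theorem~\ref{TheoremA} by $\mathcal{O}_{\mathbb{P}^2}(r)$ and read off $h^0(\mathcal{O}_{\mathbb{P}^2}(r-k))=t$ when $r\geq k$, and both handle the Brill--Noether inequality by comparing the leading $c_2$-coefficient of $\rho^t$ (namely $4-t$) against the coefficient $3$ in the stratum dimension from Corollary~\ref{dimensionstratumgeneral}. The one genuine divergence is the case $r<k$. The paper works through the exact sequence: maximality of $\mathcal{O}_{\mathbb{P}^2}(-k)\subset E(-r)$ forces $h^0(\mathcal{O}_{\mathbb{P}^2}(2k-1)\otimes I_Z)=0$, hence $h^0(\mathcal{O}_{\mathbb{P}^2}(r+k)\otimes I_Z)=0$ since $r+k\leq 2k-1$, and together with $h^0(\mathcal{O}_{\mathbb{P}^2}(r-k))=0$ this gives $h^0(E)=0$. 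You instead saturate a hypothetical non-zero section via Lemma~\ref{factors} to a sub-line bundle of non-negative degree and contradict the Segre bound $a\leq r-k<0$ directly. Your route is a bit more economical, appealing only to the definition of $S(E)$ without unwinding maximality back to the cohomology of $I_Z$; the paper's route has the advantage of making explicit which vanishing on $Z$ is responsible. Both are valid.
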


\begin{proof}
\begin{enumerate}
    \item Since  $E \in M_{\mathbb{P}^2}(2;2r,c_2;2k)$, it follows that  $E(-r) \in M_{\mathbb{P}^2}(2;0,c_2-r^2;2k)$ because $S(E(-r))=S(E).$
    By Theorem \ref{TheoremA} there exists an extension
\begin{equation} \label{exten1}
   0 \longrightarrow \mathcal{O}_{\mathbb{P}^2}(-k) \longrightarrow E(-r) \longrightarrow \mathcal{O}_{\mathbb{P}^2}(k)\otimes I_Z \longrightarrow 0,
\end{equation}
where $Z \subset \mathbb{P}^2$ of codimension $2$ has length
$l(Z)= k^2+c_2-r^2$ and $\mathcal{O}_{\mathbb{P}^2}(-k)$ is
maximal. Note that $h^0(\mathcal{O}_{\mathbb{P}^ 2}(2k-1)\otimes
I_Z)=0$  since $\mathcal{O}_{\mathbb{P}^2}(-k)$ is maximal. From
the extension (\ref{exten1}) we get
\[0 \longrightarrow \mathcal{O}_{\mathbb{P}^2}(r-k) \longrightarrow E \longrightarrow  \mathcal{O}_{\mathbb{P}^2}(r+k) \otimes I_Z \longrightarrow 0.\]
In consequence,   $E \notin W^1(2;2r,c_2) \,\, \text{ if \,\,\,
$r<k$}$ and \[h^0(E) \geq h^0(\mathcal{O}_{\mathbb{P}^2}(r-k))= t,
\,\, \text{if $r \geq k.$} \] The Brill-Noether number is given by
\begin{eqnarray*}
\rho^t(2;2r,c_2)=4c_2 -4r^2-3-t(t+c_2-2r^2-3r-2). \end{eqnarray*}
Therefore, for $c_2>>0$ we have:
\begin{eqnarray}
\rho^t(2,2r,c_2)<3c_2-3r^2+k^2+3k-3=\dim
M_{\mathbb{P}^2}(2,2r,c_2;2k).
\end{eqnarray}
\item The proof proceeds analogously to the proof of $(1)$.
\end{enumerate}
\end{proof}

Suppose that $c_1>0$ and  the Euler-Poincar\'e characteristic
$\chi=0$, i.e. $c_2=2+\frac{c_1^2+3c_1}{2}$. It is known that the
moduli space $M_{\mathbb{P}^2}(2;c_1,c_2)$ satisfies Weak
Brill-Noether, that is, there exist  $E\in
M_{\mathbb{P}^2}(2;c_1,c_2)$ such that $h^i(E)=0$ for any $i$ (see
\cite{LePotier}, Theorem 18.1.1). Note that, by semicontinuity, if
$E$ is any sheaf with no cohomology then the cohomology also
vanishes for any general sheaf in $M_{\mathbb{P}^2}(2;c_1,c_2)$
(cf. \cite{Gottsche}, \cite{Coskun}). Using the Segre Invariant,
we can give a different proof that $M_{\mathbb{P}^2}(2;c_1,c_2)$
satisfies Weak Brill-Noether. The advantage in using Segre's
invariant lies in the fact that the open set satisfying Weak
Brill-Noether can be explicitly described as the open stratum of
$M_{\mathbb{P}^2}(2;c_1,c_2)$.

\begin{Corollary} \label{weakBN} Suppose that $c_1>0$ and
$c_2=2+\frac{c_1^2+3c_1}{2}$. Then moduli space
$M_{\mathbb{P}^2}(2,c_1,c_2)$ satisfies Weak-Brill Noether.
\end{Corollary}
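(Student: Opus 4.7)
The condition $c_2 = 2 + (c_1^2 + 3c_1)/2$ is exactly Riemann-Roch's equation $\chi(E) = 0$, so it is enough to produce a stable $E$ with $h^0(E) = h^2(E) = 0$; the vanishing of $h^1(E)$ then follows automatically. The idea is to pick $E$ in the \emph{top} Segre stratum and read off the two vanishings from the defining extension.

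Write $c_1 = 2r$ or $c_1 = 2r-1$, which forces $r \geq 1$ since $c_1 > 0$. A direct substitution shows that $k = r+1$ saturates the existence bound of Corollary 3.5: in the even case one gets $r^2 + k^2 + k = 2r^2 + 3r + 2 = c_2$, and in the odd case $r^2 - r + k^2 = 2r^2 + r + 1 = c_2$. Hence a stable $E$ with Segre invariant $2k = 2r+2$ (resp.\ $2k-1 = 2r+1$) exists, realized as an extension
\[
0 \to \mathcal{O}_{\mathbb{P}^2}(-1) \to E \to \mathcal{O}_{\mathbb{P}^2}(c_1+1) \otimes I_Z \to 0,
\]
where $Z \subset \mathbb{P}^2$ is a local complete intersection not contained in any curve of degree $c_1+1$. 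This is the non-speciality condition imposed in the proof of Theorem~\ref{TheoremA} (degree $2k-1$ in the even case, degree $2k-2$ in the odd case, both equal to $c_1+1$ when $k = r+1$). It gives $h^0(\mathcal{O}_{\mathbb{P}^2}(c_1+1) \otimes I_Z) = 0$, and taking global sections of the extension yields $h^0(E) = 0$.

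For $h^2(E)$, Serre duality combined with the rank-two identity $E^\vee \cong E(-c_1)$ reduces the problem to showing $h^0(E(-c_1-3)) = 0$. Twisting the defining sequence by $\mathcal{O}_{\mathbb{P}^2}(-c_1-3)$ produces
\[
0 \to \mathcal{O}_{\mathbb{P}^2}(-c_1-4) \to E(-c_1-3) \to \mathcal{O}_{\mathbb{P}^2}(-2) \otimes I_Z \to 0.
\]
Both outer $H^0$'s vanish, since $c_1 \geq 1$ makes $-c_1 - 4 < 0$ and $h^0(\mathcal{O}_{\mathbb{P}^2}(-2)) = 0$. Thus $h^2(E) = 0$, and $h^1(E) = 0$ follows from $\chi(E) = 0$.

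Finally, the same numerical data place us inside the hypotheses of Corollary~\ref{maximalstrata}: the stratum $M_{\mathbb{P}^2}(2; c_1, c_2; s)$ for this maximal $s$ has the same dimension as $M_{\mathbb{P}^2}(2; c_1, c_2)$, and by the semicontinuity of $S$ (Lemma~\ref{semicontinuous}) it is an open dense subset. Thus Weak Brill-Noether holds on the explicitly described open stratum, as advertised in the paragraph preceding the statement. The only non-trivial ingredient is the combinatorial observation that the $\chi = 0$ hypothesis $c_2 = 2 + (c_1^2 + 3c_1)/2$ coincides with the boundary case $k = r+1$ of the existence range in Corollary 3.5; once this is noticed, the cohomology vanishings are immediate from the two short exact sequences above.
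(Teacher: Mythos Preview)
Your argument is correct and follows essentially the same route as the paper: identify $k=r+1$ as the top Segre value, sit in that stratum, and read off $h^0=h^2=0$. The paper differs only in packaging: it invokes Theorem~\ref{sections} (the case $r<k$) to get $h^0(E)=0$ for \emph{every} $E$ in the top stratum, and it dispatches $h^2(E)=0$ in one line from stability and $c_1>0$ (Serre duality gives $h^2(E)=h^0(E(-c_1-3))$, and $\mu(E(-c_1-3))<0$), whereas you compute both vanishings by hand from the extension for the specific $E$ built in the proof of Theorem~\ref{TheoremA}. One small remark: your final sentence asserts Weak Brill--Noether on the \emph{entire} open stratum, but your $h^0$ argument used the particular $Z$ not lying on a degree $c_1+1$ curve; to extend $h^0(E)=0$ to the whole stratum you need the observation (implicit in Theorem~\ref{sections}) that a nonzero section would force $S(E)\le c_1 < 2(r+1)$, or simply appeal to semicontinuity. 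This does not affect the proof of the stated corollary, which only requires one such $E$.
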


\begin{proof}
Since $\chi=0$ and $c_1>0$, it follows that $h^2(E)=0$ for any
stable vector bundle with $c_1(E)=c_1$. Assume that $c_1=2r$
(resp. $c_1=2r-1$) with $r\in \mathbb{N}$. Since the
Euler-Poincar\'e characteristic $\chi=0$, it follows that
$c_2=2r^2+3r+2$ (resp. $c_2=2r^2+r+1$). Set $k=r+1,$ note that $k$
is the largest integer such that $r^2+k^2+k\leq c_2$ (resp.
$k^2+r^2-r\leq c_2$), then by Corollary \ref{maximalstrata} we
have that the stratum $M_{\mathbb{P}^2}(2;2r,c_2;2k)$ (resp.
$M_{\mathbb{P}^2}(2;2r-1,c_2;2k-1)$) is open  and by Theorem
\ref{sections} $h^0(E)=0$ for any $E\in
M_{\mathbb{P}^2}(2;2r,c_2;2k) $ (resp. $E\in
M_{\mathbb{P}^2}(2;2r-1,c_2;2k-1)$).
\end{proof}

\begin{Remark} It  is also  known that  under the conditions of
Corollary \ref{weakBN} the complement of the maximun stratum is a
reduced hypersurface (\cite{Gottsche0}, Theorem 2).\end{Remark}

For larger values of $t$ further information can be obtained. For
this we use the existence of special configurations of points:

\begin{Theorem}\label{LocusBN}
\begin{enumerate}
\item Let $r, c_2,k$ be  integer  numbers satisfying  $r^2+2 \leq c_2$ and $k<r$. Then, a vector bundle $E\in M_{\mathbb{P}^2}(2;2r,c_2,2k)$
exists such that $h^0(E)\ge  (r-k)^2+4(r-k)+3$.

\item  Let $r, c_2$ be  integer  numbers satisfying  $r^2-r+1 \leq c_2$. Then, a vector bundle $E\in M_{\mathbb{P}^2}(2;2r-1,c_2,2k-1)$
exists such that $h^0(E)\ge t$, where
\[t=\begin{cases}
(r-k)^2+4(r-k)+3  & \text{if  $k\neq 1$}, \\
r^2+r-1 & \text{if  $k= 1$.}
\end{cases} \]
\end{enumerate}
\end{Theorem}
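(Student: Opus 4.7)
The plan is to apply Theorem~\ref{TheoremA} to realize the desired $E$ as an extension and to choose the zero-dimensional subscheme $Z$ with a very specific geometric structure so that the cokernel has the required number of global sections.

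For case (1), twisting Theorem~\ref{TheoremA}(1) applied to $E(-r)\in M_{\mathbb{P}^2}(2;0,c_2-r^2;2k)$ by $\mathcal{O}_{\mathbb{P}^2}(r)$ yields
\[0\longrightarrow \mathcal{O}_{\mathbb{P}^2}(r-k) \longrightarrow E\longrightarrow \mathcal{O}_{\mathbb{P}^2}(r+k)\otimes I_Z\longrightarrow 0,\]
with $l(Z)=c_2+k^2-r^2$ and $Z$ not contained in any curve of degree $2k-1$. Since $r>k$, $H^1(\mathcal{O}_{\mathbb{P}^2}(r-k))=0$, so the long exact sequence of cohomology gives
\[h^0(E) \;=\; \tfrac{(r-k+1)(r-k+2)}{2} + h^0(I_Z(r+k)).\]
Thus it suffices to exhibit, for each admissible triple $(r,c_2,k)$, a length-$l(Z)$ subscheme $Z$ not lying on a degree-$(2k-1)$ curve and satisfying the Cayley--Bacharach property (Lemma~\ref{tool} together with Theorem~\ref{Cayley-Bacharach}), with $h^0(I_Z(r+k))\geq \tfrac{(r-k+1)(r-k+4)}{2}$.

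The key construction is to take $Z$ supported on a smooth plane curve $C$ of degree $2k$ and to arrange $Z\subset C$ as a special divisor with respect to the line bundle $\mathcal{O}_C((r+k)H)$. From the sequence $0\to I_C(r+k)\to I_Z(r+k)\to \mathcal{O}_C(r+k)\otimes I_{Z,C}\to 0$ (whose kernel $I_C(r+k)\cong \mathcal{O}_{\mathbb{P}^2}(r-k)$ contributes $\tfrac{(r-k+1)(r-k+2)}{2}$ sections) one obtains
\[h^0(I_Z(r+k))=h^0(\mathcal{O}_{\mathbb{P}^2}(r-k))+h^0(\mathcal{O}_C(r+k)\otimes I_{Z,C}).\]
On the plane curve $C$ of genus $g=(2k-1)(k-1)$, one chooses $Z$ so that the residual line bundle $\mathcal{O}_C(r+k)\otimes I_{Z,C}$ is sufficiently special, e.g.\ of the form $K_C\otimes L$ with $h^0(L)\geq r-k+1$; Riemann--Roch and Serre duality on $C$ then deliver the desired lower bound on $h^0$. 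Maximality of $\mathcal{O}(-k)$ follows from Bezout (no curve of degree $2k-1$ can meet $C$ in $l(Z)$ points once the configuration is chosen appropriately), and the Cayley--Bacharach condition is verified exactly as in the proof of Theorem~\ref{TheoremA}.

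Case (2) follows the same pattern with $Z$ on a smooth curve of degree $2k-1$ and the extension $0\to \mathcal{O}(r-k)\to E\to \mathcal{O}(r+k-1)\otimes I_Z\to 0$. The case $k=1$ is exceptional because ``degree $2k-1=1$'' means that $Z$ is placed on a line $L$, and the exact sequence $0\to \mathcal{O}(r-1)\to I_Z(r)\to \mathcal{O}_L(r-l(Z))\to 0$ gives $h^0(E)\geq r(r+1)\geq r^2+r-1$ by a direct $\mathbb{P}^1$-computation, accounting for the slightly weaker stated bound. The main obstacle is to produce the required special $Z$ uniformly across the admissible range $c_2\geq r^2+2$: when $l(Z)$ is large the residual line bundle on $C$ has very negative degree, forcing $Z$ to be spread across a union of curves of degree $2k$ or a complete-intersection structure, and the Brill--Noether-type verification that such $Z$ realises the required cohomology while staying off every curve of degree $2k-1$ is the essential technical core of the proof.
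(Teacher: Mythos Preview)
Your framework is right---realise $E$ via the extension of Theorem~\ref{TheoremA} and control $h^0(I_Z(r+k))$ by a clever choice of $Z$---but the specific construction you propose does not close. Placing $Z$ on a smooth curve $C$ of degree $2k$ and appealing to Brill--Noether on $C$ requires the residual line bundle $\mathcal{O}_C(r+k)\otimes I_{Z,C}$ to have at least $r-k+1$ sections; its degree is $(r+k)^2-c_2$, which is negative as soon as $c_2>(r+k)^2$, so for large $c_2$ there are no sections at all and the bound collapses. You flag this at the end (``forcing $Z$ to be spread across a union of curves \ldots'') but offer no mechanism, and there is none within the degree-$2k$ scheme: the needed sections simply do not exist on $C$.

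The paper's construction avoids this entirely and is more elementary. One takes a curve $C$ of degree $2k-1$ (not $2k$), chooses $Z'\subset C$ general of length $\dim\mathbb{P}H^0(\mathcal{O}_{\mathbb{P}^2}(2k-1))$ so that $C$ is the \emph{unique} curve of degree $2k-1$ through $Z'$, adds further points on $C$ to reach length $l(Z)-1$, and finally adds one point $p\notin C$. Then $h^0(\mathcal{O}_{\mathbb{P}^2}(2k-1)\otimes I_Z)=0$ (so $\mathcal{O}(-k)$ is maximal), Cayley--Bacharach for $(\mathcal{O}(2k-3),Z)$ follows by the same multiplication trick as Lemma~\ref{tool}, and the key inequality comes from the injection
\[
H^0(\mathcal{O}_{\mathbb{P}^2}(r-k+1)\otimes I_p)\hookrightarrow H^0(\mathcal{O}_{\mathbb{P}^2}(r+k)\otimes I_Z),\qquad D\longmapsto D+C,
\]
which gives $h^0(I_Z(r+k))\ge \binom{r-k+3}{2}-1=\tfrac{(r-k+1)(r-k+4)}{2}$, exactly the amount needed, \emph{uniformly in $c_2$}. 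No curve Brill--Noether is invoked. Case~(2) is identical with $\deg C=2k-2$, and for $k=1$ one uses a line with one point off it; your variant of putting all of $Z$ on the line also works there, but the general case needs the ``degree $2k-1$ plus one extra point'' idea.
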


\begin{proof}
\begin{enumerate}
    \item    Let $Z'$
    be a reduced zero-cycle of length $l(Z')=\dim \mathbb{P}H^0(\mathcal{O}_{\mathbb{P}^2}(2k-1))$
    such that $H^0(\mathcal{O}_{\mathbb{P}^2}(2k-1)\otimes I_{Z'})=\mathbb{C}. C$ for some curve $C$ of degree $2k-1$. Complete $Z'$ to
    a zero cycle $\tilde Z = Z'+ Z^{''}$ such that $Supp
    (Z^{''})\subset C$ and $l(\tilde Z)=c_2+ k^2 -r^2 -1$ and set
    $Z=\tilde Z +p$ for $p$ some point not contained in $C$.

    Then it follows from an argument similar to the one used in
    the proof of the Lemma \ref{tool} that the pair $(\mathcal{O}_{\mathbb{P}^2}(2k-3),Z)$ satisfies the
Cayley-Bacharach property (see Theorem \ref{Cayley-Bacharach}).
Indeed, if $Z=Z_0+p_0$ and $0\ne C_0 \in
H^0(\mathcal{O}_{\mathbb{P}^2}(2k-3)\otimes I_{ Z_0})$ exists,
then the map:

$$H^0(\mathcal{O}_{\mathbb{P}^2}(2)\otimes I_{p_0}) \to H^0(\mathcal{O}_{\mathbb{P}^2}(2k-1)\otimes I_{
Z}),$$

$$C \longmapsto CC_0,$$
must be injective.

Therefore an extension:
\begin{equation}\label{brillnoether1}
  0 \longrightarrow \mathcal{O}_{\mathbb{P}^2}(-k) \longrightarrow E \longrightarrow  \mathcal{O}_{\mathbb{P}^2}(k)\otimes I_Z \longrightarrow 0
\end{equation}
exists with $E\in M_{\mathbb{P}^2}(2,0,c_2-r^2)$. Since $Z-p
\subset C$ we see that any curve of degree $r-k-1$ passing through
$p$ gives rise to an element of
$H^0(\mathcal{O}_{\mathbb{P}^2}(r+k)\otimes I_Z)$. Thus,
\[
h^0(\mathcal{O}_{\mathbb{P}^2}(r+k)\otimes I_Z)\geq
h^0(\mathcal{O}_{\mathbb{P}^2}(r-k+1))-1=\frac{(r-k)^2+5(r-k)}{2}+2.
\]
In this way we obtain that $E(r)\in M_{\mathbb{P}^2}(2,2r,c_2)$
and from the exact sequence (\ref{brillnoether1}):

 \[
 h^0(E(r))=h^0(\mathcal{O}_{\mathbb{P}^2}(r-k))+h^0(\mathcal{O}_{\mathbb{P}^2}(r+k)\otimes I_Z)
 \geq (r-k)^2+4(r-k)+3,
 \] as  desired.

 \item The proof for the case $k\neq 1$ follows analogously to the proof of $(1)$. Taking $C \subset \mathbb{P}^2$
 be an irreducible curve of degree $2k-2$ and  $Z\subset \mathbb{P}^2$ be distinct points with length $l(Z)=c_2+k^2-k -r^2+r$
 such that $Z-\{p\}$ is contained in $C$ but $Z$ is not contained in $C$.  Note that the pair $(\mathcal{O}_{\mathbb{P}^2}(2k-4), Z)$
 satisfies the Cayley- Bacharach property and
 \[
h^0(\mathcal{O}_{\mathbb{P}^2}(r+k-1)\otimes I_Z)\geq
h^0(\mathcal{O}_{\mathbb{P}^2}(r-k+1))-1=\frac{(r-k)^2+5(r-k)}{2}+2.
\]
From the exact sequence
\[0 \longrightarrow \mathcal{O}_{\mathbb{P}^2}(r-k) \longrightarrow E(r) \longrightarrow  \mathcal{O}_{\mathbb{P}^2}(r+k-1)\otimes I_Z
\longrightarrow 0,\] we have:
\[
 h^0(E(r))=h^0(\mathcal{O}_{\mathbb{P}^2}(r-k))+h^0(\mathcal{O}_{\mathbb{P}^2}(r+k-1)\otimes I_Z)\geq (r-k)^2+4(r-k)+3.
 \]

We now proceed to prove the case $k=1$. Let $L\subset\mathbb{P}^2$
be a line and  let $Z\subset \mathbb{P}^2$ be distinct points with
length $l(Z)=c_2-r^2+r$ such that $Z-\{p\}$ is contained in $L$
but $Z$ is not contained in $L$. Note that the pair
$(\mathcal{O}_{\mathbb{P}^2}(-2),Z)$ satisfies the
Cayley-Bacharach property, thus, there exists an extension
\begin{equation}
  0 \longrightarrow \mathcal{O}_{\mathbb{P}^2}(-1) \longrightarrow E \longrightarrow  \mathcal{O}_{\mathbb{P}^2}\otimes I_Z \longrightarrow 0.
\end{equation}
with $c_1(E)=-1$ and $c_2(E)=c_2-r^2+r$. Moreover, $E$ is a stable
vector bundle because  $Z\neq \emptyset$.

Note that \[h^0(\mathcal{O}_{\mathbb{P}^2}(r)\otimes I_Z)\geq
h^0(\mathcal{O}_{\mathbb{P}^2}(r-1))-1=\frac{r(r+1)}{2}-1\] and
$E(r)$ is a stable vector bundle with $c_1(E(r))=2r-1$ and
$c_2(E(r))=c_2$. From the exact sequence (\ref{brillnoether1})  we
get
 \begin{equation*}
  0 \longrightarrow \mathcal{O}_{\mathbb{P}^2}(r-1) \longrightarrow E(r) \longrightarrow  \mathcal{O}_{\mathbb{P}^2}(r)\otimes I_Z \longrightarrow 0,
 \end{equation*}
 and taking cohomology we have
 \[h^0(E(r))=h^0(\mathcal{O}_{\mathbb{P}^2}(r-1))+h^0(\mathcal{O}_{\mathbb{P}^2}(r)\otimes I_Z)\geq r^2+r-1.\]
\end{enumerate}
\end{proof}

Once that Theorem \ref{LocusBN} has established the non-emptiness
of some Brill-Noether loci it is natural to search for a lower
bound for its dimensions.

\begin{Theorem} \label{dimensionlocusBN}\begin{enumerate}
     \item Let $r, k, c_2 \in \mathbb{N}$  such that  $r\geq 2$,  $3k^2-4k +r^2+2 < c_2$ and $k<r$.  Let $t= (r-k)^2+4(r-k)+3$. Then,

\[dim\, W^t(2;2r,c_2) \geq
\begin{cases}
2c_2+2k^2-2r^2+4k-2,  & \text{if $c_2> k^2+3k+r^2+1$} \\
k^2+3c_2+k-r^2-3,  & \text{if $c_2 \leq k^2+3k+r^2+1$.}
 \end{cases}\]

 \item   Let $r, k, c_2 \in \mathbb{N}$  such that  $r\geq 2$ and $3k^2-7k + r^2-r+5 < c_2$ and $k<r$.
 \[t= \begin{cases}
 (r-k)^2+4(r-k)+3, & \text{if $k \neq 1$}\\
 r^2+r-1, & \text{if $k =1$}.
 \end{cases}\]

Then,
\[dim\, W^t(2;2r-1,c_2) \geq  \begin{cases}
2c_2+2k^2-2r^2+2k+2r-4, & \text{if }c_2> k^2-2k+r^2-r \\
3c_2+k^2-3r^2+3r-4,  & \text{if } c_2 \leq k^2-2k+r^2-r,
\end{cases}\] if $k\neq 1$, and

\[ dim \, W^{t}(2;2r-1,c_2) \geq 3c_2-3r^2+3r+1, \]
if $k-1$.
\end{enumerate}
\end{Theorem}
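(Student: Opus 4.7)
The plan is to refine the parameter count from the proof of Theorem \ref{dimensionstratum} to the subfamily of bundles constructed in Theorem \ref{LocusBN}, which by construction lies inside $W^t$. Since tensoring with $\mathcal{O}_{\mathbb{P}^2}(r)$ is an isomorphism of moduli, I would work inside $M_{\mathbb{P}^2}(2;0,c_2-r^2;2k)$ for part (1) and transfer the result via that isomorphism.

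First I would set up the relevant Hilbert-scheme locus $H^{sp} \subset Hilb^{c_2+k^2-r^2}(\mathbb{P}^2)$ of \emph{special} zero-cycles $Z = Z_0 + p$, with $Z_0$ contained in some curve $C$ of degree $2k-1$ and $p \notin C$. Parameterizing by triples $(C,Z_0,p)$, and noting that under the hypothesis $3k^2-4k+r^2+2 < c_2$ the curve $C$ is generically determined by $Z_0$, one has
\[
\dim H^{sp} = (2k^2+k-1) + (c_2+k^2-r^2-1) + 2 = c_2+3k^2+k-r^2.
\]
A Bezout argument based on the same hypothesis (applied to a general irreducible $C$) shows that $h^0(\mathcal{O}_{\mathbb{P}^2}(2k-3) \otimes I_Z) = 0$ for generic $Z \in H^{sp}$, so by Serre duality and Riemann--Roch, $\dim Ext^1(\mathcal{O}(k) \otimes I_Z, \mathcal{O}(-k)) = c_2-r^2-k^2+3k-1$. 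Mimicking the construction of Theorem \ref{dimensionstratum}, I would form the projective bundle $\mathbb{P}\Gamma^{sp} \to H^{sp}$ with its tautological extension, restrict to the open set $U$ where the associated bundle is stable with Segre invariant $2k$, and observe via Theorem \ref{LocusBN} that the classifying morphism $U \to M_{\mathbb{P}^2}(2;0,c_2-r^2;2k)$ lands, after tensoring with $\mathcal{O}(r)$, inside $W^t(2;2r,c_2)$.

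The bound then reduces to a generic fiber analysis: $\dim W^t \geq \dim U - \delta$, where $\delta$ is the dimension of the generic fiber of this morphism. From the exact sequence $0 \to \mathcal{O} \to E(k) \to \mathcal{O}(2k) \otimes I_Z \to 0$ one computes $h^0(E(k)) = 1 + h^0(\mathcal{O}(2k) \otimes I_Z)$; in the regime $c_2 > k^2+3k+r^2+1$ only the distinguished pencil of decomposable divisors $C+L$ (with $L$ a line through $p$) contributes, giving $h^0(E(k)) = 3$, and showing $\delta = 0$ (i.e.\ that the distinguished subbundle is the only one producing a quotient in $H^{sp}$) yields the first bound. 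In the complementary regime, Riemann--Roch forces additional sections and the fiber becomes positive-dimensional, producing the second bound after the analogous substitution. Part (2) is handled by the parallel construction with $C$ replaced by a curve of degree $2k-2$ (or a line, when $k=1$), giving $\dim H^{sp} = c_2+3k^2-2k-r^2+r$ and $\dim Ext^1 = c_2-r^2+r-k^2+4k-3$. The main technical obstacle throughout is verifying the generic-finiteness of the classifying morphism in the large-$c_2$ regime: the naive estimate $\delta \leq h^0(E(k)) - 1$ would fall short of the claimed bound by the precise number of extra sections coming from the $C+L$ pencil, so one has to argue that, for a generic bundle in the image, only one of the $\mathbb{P}H^0(E(k))$-family of maximal line subbundles has a quotient zero-cycle lying in $H^{sp}$.
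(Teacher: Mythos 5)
Your construction is essentially the paper's: the incidence variety of triples $(p,\tilde Z,C)$ with $\tilde Z\subset C$ irreducible of degree $2k-1$ and $p\notin C$; generic injectivity of the projection forgetting $C$ (by Bezout, since $\tilde l>(2k-1)^2$), giving $\dim H^{sp}=(2k^2+k-1)+(l-1)+2=c_2+3k^2+k-r^2$; the vanishing $h^0(\mathcal{O}_{\mathbb{P}^2}(2k-3)\otimes I_Z)=0$ and the resulting $\dim Ext^1(\mathcal{O}_{\mathbb{P}^2}(k)\otimes I_Z,\mathcal{O}_{\mathbb{P}^2}(-k))=c_2-r^2-k^2+3k-1$; the projective bundle of extensions with its tautological family; and the classifying morphism whose image lies in $W^t$ by Theorem \ref{LocusBN}. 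All of your intermediate dimension counts agree with the paper's, and the reduction to $c_1=0$ by twisting is harmless.

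The genuine gap is the one you flag yourself and then do not close: the claim $\delta=0$, i.e.\ that for a generic bundle $E$ in the image only the distinguished maximal sub-line bundle has its quotient zero-cycle lying in the special locus $\phi(\mathcal{U})$. This is not a routine verification. The two extra sections of $E(k-r)$ produced by the pencil $C+L$, $L\ni p$, have zero loci contained in $C\cup L$, and a priori such a zero cycle can again split as a length-$(l-1)$ subscheme of $C$ plus one point off $C$, i.e.\ land back in $\phi(\mathcal{U})$; if that happens for a positive-dimensional family of sections, then $\delta\ge 1$ and the first-regime bound degrades from $\dim U$ to $\dim U-\delta$. Ruling this out requires an actual analysis of the zero schemes of the sections lying over the pencil $C\cdot H^0(\mathcal{O}_{\mathbb{P}^2}(1)\otimes I_p)$, which you only announce. (The same issue recurs in your second regime, where you invoke Riemann--Roch for the fiber dimension without pinning down $h^0(\mathcal{O}_{\mathbb{P}^2}(2k)\otimes I_Z)$ for the \emph{special}, non-generic $Z$, and in part (2).) For context: the paper itself does not resolve this point either --- its printed proof only uses the crude estimate $\dim f_t^{-1}(E)\le\dim\mathbb{P}H^0(E(k-r))$ and then records closing formulas that do not follow from that estimate --- so your diagnosis locates the real difficulty more precisely than the printed argument, but as a proof of the stated inequalities your proposal is incomplete at exactly this step.
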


\begin{proof}
We only prove $(1)$, the proof of $(2)$ follows by similar
arguments.

Let $l= k^2+c_2-r^2$ and let $Hilb^{\tilde{l}}(\mathbb{P}^2)$ be
the Hilbert scheme of zero-dimensional subschemes of length
$\tilde{l}= l-1$. Let $\mathcal{I}\subset
\mathbb{P}H^0(\mathcal{O}_{\mathbb{P}^2}(2k-1))$ be the subset of
irreducible curves of degree $2k-1$ and consider the variety:
\[I :=\{(p,\tilde{Z}, C) \in \mathbb{P}^2 \times Hilb^{\tilde{l}}(\mathbb{P}^2) \times \mathcal{I} :
p \notin C \, , \tilde{Z} \subset C  \}.\]

For every $C\in \mathcal{I}$ its fiber $p_3^{-1}(C)$ under the
projection onto the third factor is the irreducible variety
$(\mathbb{P}^2-C) \times S^{\tilde l}C$, which have the same
dimension for every $C$. Let $I_0\subset I$ be an irreducible
component such that $p_3: I_0 \to \mathcal{I}$ is dominant and
$\dim I_0=\dim I= k(2k+1)+l$.

Let $p_{12}: I_0\to \mathbb{P}^2 \times
Hilb^{\tilde{l}}(\mathbb{P}^2)$ be the projection onto the first
two factors. Because of the choice of $l$ and $c_2$, we have that
$\tilde l> (2k-1)^2$. Thus $p_{12}$ is injective and therefore it
is a birational morphism. In this way, an open set $
\mathcal{U}\subset p_{12}(I_0)$ exists such that $\dim I_0 =\dim
\mathcal{U}$.

Consider the finite morphism
\begin{align*}
    \phi:\mathbb{P}^2 \times Hilb^{\tilde{l}}(\mathbb{P}^2) & \longrightarrow Hilb^{l}(\mathbb{P}^2) \\
    (p, \tilde{Z}) & \longmapsto p + \tilde{Z},
\end{align*}
and the set
\begin{equation} \label{set}
    H \cap \phi(\mathcal{U}),
\end{equation}
where \[H:= \{Z \in Hilb^l(\mathbb{P}^2) : h^0(
\mathcal{O}_{\mathbb{P}^2}(2k-1) \otimes I_Z) = 0\}.\]

Note that by the proof of the Theorem \ref{TheoremA} the set
(\ref{set}) is non-empty, moreover $\phi(\mathcal{U}) \subset
\phi(p_{12}(I)) \subset H$. Therefore, $H \cap \phi(\mathcal{U})=
\phi(\mathcal{U})$

Now, we can now proceed analogously to the proof of Theorem
\ref{dimensionstratum}. Consider the sheaf
$\mathbb{P}\Gamma:=\mathbb{P}
R^1_{p_{{2}_*}}\mathcal{H}om(p_{1}^*\mathcal{O}_{\mathbb{P}^2}(k+r)
\otimes \mathcal{I}_{\mathcal{Z}_l},
p_{1}^*\mathcal{O}_{\mathbb{P}^2}(r-k))$ over $ \phi(\mathcal{U})$
and the exact sequence
\begin{equation} \label{univextension1}
0 \longrightarrow
(id\times\pi)^*p_{1}^*\mathcal{O}_{\mathbb{P}^2}(r-k) \otimes
\mathcal{O}_{\mathbb{P}^2 \times \mathbb{P}\Gamma}(1)
\longrightarrow \mathcal{E} \longrightarrow
(id\times\pi)^*(p_{1}^*\mathcal{O}_{\mathbb{P}^2}(k+r) \otimes
\mathcal{I}_{\mathcal{Z}_l}) \longrightarrow 0.
\end{equation}
on $\mathbb{P}^2 \times \mathbb{P}\Gamma$. Define the set
\[U:= \{p \in  \mathbb{P}\Gamma  : \text{$\mathcal{E}_{|_p}$ is stable and $S(\mathcal{E}_p)=2k$}\}.\]

From Theorem \ref{TheoremA}, the lower semicontinuity of the
function $S$   and stability being an open condition we conclude
that the set $U$ is non-empty and open in $\mathbb{P} \Gamma$.
Restricting the sequence (\ref{univextension1}) on $\mathbb{P}^2
\times U$ from the universal property of the moduli space
$M_{\mathbb{P}^2}(2;c_1,c_2)$ we have a morphism
\[f_t:U \longrightarrow M_{\mathbb{P}^2}(2;2r,c_2).\]

Note that, by the proof of Theorem \ref{LocusBN}  the set $f_t(U)$
is contained in the locus of Brill-Noether $W^{t}(2;2r,c_2)$ where
$t=(r-k)^2+4(r-k)+3$. Moreover, \[f_t(U) \subseteq
\overline{f_t(U)} \subseteq  W^{t}(2;2r,c_2).\]

We proceed now to determine the dimension of $Im
\overline{f_t(U)}$ and a lower bound of the locus
$W^{t}(2;2r,c_2)$,

    \[ \dim \, W^{t}(2;2r,c_2) \geq
 \dim\, Im \overline{f_t(U)}  \geq \dim \, U - dim f_t^{-1}(E) \]
which is equivalent to
 \begin{align*}
     \dim\, Im \overline{f_t(U)} & \geq
     \dim\,   \phi{(\overline{p_{12}(I)})} + \dim \, Ext^1( \mathcal{O}_{\mathbb{P}^2}(k+r)\otimes I_Z, \mathcal{O}_{\mathbb{P}^2}(r-k)) - \dim \,\mathbb{P}H^0(E(k-r))-1 \\
     & = \dim \, \phi(p_{12}(I)) +  \dim \, Ext^1( \mathcal{O}_{\mathbb{P}^2}(k+r)\otimes I_Z, \mathcal{O}_{\mathbb{P}^2}(r-k)) -
     \dim \,\mathbb{P}H^0(E(k-r))-1 \\
     & \geq \dim \, \phi(\mathcal{U}) +  \dim \, Ext^1( \mathcal{O}_{\mathbb{P}^2}(k+r)\otimes I_Z, \mathcal{O}_{\mathbb{P}^2}(r-k)) -
     \dim \,\mathbb{P}H^0(E(k-r))-1 .
 \end{align*}

From the above and the proof of Theorem \ref{dimensionstratum}  we
conclude that

\[ \dim\, Im \overline{f_t(U)} \geq
\begin{cases}
c_2+k^2-r^2+4k-2,  & \text{if $c_2>k^2+r^2+3k+1$} \\
2c_2+k-2r^2-3,  & \text{if $c_2 \leq k^2+3k+r^2+1$.}
 \end{cases}\]
as claimed.

\end{proof}

\end{document}